\documentclass[11pt]{amsart}

\usepackage[T1]{fontenc}
\usepackage{lmodern}
\usepackage{microtype}
\usepackage{amsmath,amssymb,amsthm,mathtools}
\usepackage{enumitem}
\usepackage{hyperref}
\usepackage[nameinlink,capitalize]{cleveref}
\usepackage{mathrsfs}

\hypersetup{
  colorlinks=true,
  linkcolor=blue,
  citecolor=blue,
  urlcolor=blue
}

\numberwithin{equation}{section}

\newtheorem{theorem}{Theorem}[section]
\newtheorem{proposition}[theorem]{Proposition}
\newtheorem{lemma}[theorem]{Lemma}
\newtheorem{corollary}[theorem]{Corollary}

\theoremstyle{definition}

\newtheorem{example}[theorem]{Example}
\theoremstyle{remark}
\newtheorem{remark}[theorem]{Remark}

\DeclareMathOperator{\Sym}{Sym}
\DeclareMathOperator{\Sing}{Sing}

\DeclareMathOperator{\Tor}{Tor}

\newcommand{\PP}{\mathbb P}
\newcommand{\CC}{\mathbb C}
\newcommand{\OO}{\mathcal O}

\newcommand{\cS}{\mathcal S}
\newcommand{\cP}{\mathcal P}
\newcommand{\Om}{\Omega}
\newcommand{\reflex}[1]{\left(#1\right)^{\vee\vee}}

\title[Symmetric differentials on weighted hypersurfaces]
{Symmetric differentials on hypersurfaces in weighted projective three-space}

\author{Wan-Yuan Xu}
\address{Department of Mathematics, Shanghai Normal University, Shanghai 200234, P.\ R.\ China}
\email{wanyuanxu@shnu.edu.cn}

\subjclass[2020]{14J70, 14J29, 14B05, 14F10}
\keywords{weighted hypersurface, symmetric differential, quotient singularity, big cotangent bundle}

\begin{document}

\begin{abstract}
We investigate symmetric differentials on hypersurfaces in weighted projective three space.  For weighted degree 
\(d>\sum a_i\), we prove twisted vanishing for smooth hypersurfaces and the corresponding reflexive vanishing for well-formed quasi-smooth hypersurfaces.  In the opposite direction, using the quotient-singularity criterion of Asega--De Oliveira--Weiss together with weighted Kummer and Segre constructions, we produce singular weighted hypersurfaces whose minimal resolutions have big cotangent bundle.  These include examples in
\(\PP(1,1,1,r)\) for every \(r\ge2\).  Combining the vanishing and bigness results with simultaneous resolution yields deformation-equivalent smooth projective surfaces for which the symmetric plurigenera jump from zero in every positive order to cubic asymptotic growth.
\end{abstract}

%

\maketitle

\section{Introduction}

Let $X$ be a smooth complex projective variety.  A symmetric differential of
order $m$ on $X$ is a section of $\Sym^m\Om_X^1$.  Symmetric differentials
occur naturally in the study of positivity of the cotangent bundle and
hyperbolicity \cite{Bogomolov,BdO2006,Br17,CDR,DR,RR2013,RR2014}, and they
play a basic role in the Green--Griffiths--Lang philosophy
\cite{Bogomolov,McQuillan}.  Their existence is also sensitive to birational
and deformation-theoretic phenomena \cite{BdO2006,BdO2008}.

For smooth subvarieties of ordinary projective space there are strong
vanishing results.  Bruckmann proved that a smooth hypersurface in projective
space carries no nonzero global symmetric differentials of positive order
\cite{Bruckmann}.  Sakai obtained related vanishing theorems for complete
intersections \cite{Sakai}, and Schneider proved the more general twisted
vanishing \cite{Schneider}
\[
 H^0\!\left(X,\Sym^m\Om_X^1\otimes\OO_X(t)\right)=0,
 \qquad t<m,
\]
for submanifolds $X\subset\PP^N$ in the appropriate dimension range.
Bogomolov and De Oliveira later gave a geometric approach to twisted
symmetric differentials and used singular hypersurfaces to exhibit striking
non-vanishing and deformation-jumping phenomena \cite{BdO2008}.

The purpose of this paper is to study both sides of this picture for
hypersurfaces in a weighted projective three-space
\[
 P=\PP(a_0,a_1,a_2,a_3).
\]
We refer to \cite{Dolgachev,Fletcher} for the basic theory of weighted
projective spaces and weighted hypersurfaces.  

Our first result therefore treats the smooth and quasi-smooth cases in their
natural forms.  For smooth coarse hypersurfaces we obtain the usual
sheaf-theoretic vanishing, whereas for quasi-smooth hypersurfaces we obtain a
vanishing theorem for reflexive symmetric differentials.

\begin{theorem}[Vanishing]\label{thm:intro-vanishing}
Let $P=\PP(a_0,a_1,a_2,a_3)$ be well formed, let
$S\subset P$ be a hypersurface of weighted degree $d$, and assume
\[
 d>\sum_{i=0}^3 a_i.
\]
Then the following hold.
\begin{enumerate}[label=\textup{(\roman*)}]
\item If $S$ is smooth and contained in $P_{\rm reg}$, then for every
$m\ge1$ and every integer $t<m$,
\[
 H^0\!\left(S,\Sym^m\Om_S^1\otimes\OO_S(t)\right)=0.
\]
In particular $H^0(S,\Sym^m\Om_S^1)=0$ for every $m\ge1$.
\item If $S$ is quasi-smooth and well formed, then for every $m\ge1$ and every integer
$t<m$,
\[
 H^0\!\left(S,
 \left(\Sym^m\Om_S^1\otimes\OO_S(t)\right)^{\vee\vee}\right)=0.
\]
In particular
\[
 H^0(S,\Sym^{[m]}\Om_S^1)=0,
 \qquad
 \Sym^{[m]}\Om_S^1:=\left(\Sym^m\Om_S^1\right)^{\vee\vee}.
\]
\end{enumerate}
\end{theorem}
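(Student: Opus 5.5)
The strategy is to reduce everything to a computation on the quasi-cone, following the Bruckmann/Schneider strategy in its weighted form. Let $C_S\subset\CC^4$ be the affine quasi-cone over $S$, cut out by the weighted homogeneous polynomial $F$ of degree $d$. Let $U=C_S\setminus\{0\}$, with the free-in-codimension-one $\CC^*$-action and quotient $\pi\colon U\to S$. Quasi-smoothness means $U$ is smooth. Well-formedness of $P$ and $S$ ensures that $\pi$ is a Seifert $\CC^*$-bundle with no divisorial stabilizers. Hence reflexive sheaves on $S$ correspond to $\CC^*$-invariant (suitably graded) sections on $U$. Since $\dim C_S=3\ge2$ and $C_S$ is a normal hypersurface, sections on $U$ extend over $0$.

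Step 1: the weighted Euler sequence. On $U$ I use the exact sequences
\[0\to \pi^*\Om^{1}_S\to \Om^1_U\to \OO_U\to0\]
(contraction with the Euler field $E=\sum a_i x_i\partial_i$) and
\[0\to \OO_U(-d)\xrightarrow{dF}\Om^1_{\CC^4}|_U\to\Om^1_U\to0.\]
From these, a section of $(\Sym^m\Om^1_S\otimes\OO_S(t))^{\vee\vee}$ becomes a section $\omega$ of $\Sym^m\Om^1_U$ that is weighted homogeneous of degree $t$ (where $x_i$ and $dx_i$ both have weight $a_i$) and annihilated by contraction with $E$. For (i), $S\subset P_{\rm reg}$ and the sheaves are already locally free, so (i) is the special case in which the reflexive hull is unnecessary. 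I would nevertheless phrase the argument uniformly.

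Step 2: lifting to the ambient space. I lift $\omega$ to a polynomial symmetric form $\tilde\omega=\sum_{|\alpha|=m} f_\alpha\, dx^\alpha$ on $\CC^4$, defined modulo $F\cdot(\ )$ and $dF\cdot(\ )$. The lift exists via the Koszul-type resolution
\[0\to \Sym^{m-1}\Om_{\CC^4}(-d)\to \Sym^m\Om_{\CC^4}\to\Sym^m\Om_{U}\to0\]
together with vanishing of $H^1(U,\cdot)$ in the relevant graded pieces. That vanishing holds because $U$ is the punctured cone over a 2-dimensional base with depth $3$. Homogeneity gives $\deg f_\alpha = t-\sum\alpha_i a_i$. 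Since $t<m$ and each $a_i\ge1$, we get $\deg f_\alpha<0$ unless weights compensate. A naive count therefore forces $f_\alpha=0$, except that the lift is only determined up to multiples of $dF$, which has degree $d$.

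Step 3: the key argument, a division step. The honest argument is Schneider's. The condition $\iota_E\omega=0$ together with homogeneity is propagated by induction on $m$. I write $\tilde\omega=dF\cdot\eta+F\cdot\theta$, with $\eta$ a symmetric $(m-1)$-form of degree $t-d$. Applying $\iota_E$ and using $\iota_E dF=dF$ (Euler), I reduce $\eta$ to a form of the same type with data $(m-1,\,t-d)$. The condition $t-d<m-1$ is preserved because $d>\sum a_i\ge 1$. The induction ends when $m=0$, where $H^0(S,\OO_S(t))=0$ for $t<0$. For $0\le t$ one needs the extra input that the degree is too small, and here the hypothesis $d>\sum a_i$ enters: it makes $K_S=\OO_S(d-\sum a_i)$ ample, and the obstruction group $H^1$ of a twist of $\OO_S$ appearing in the sequence vanishes by weighted Bott/Kodaira vanishing on $P$ (Dolgachev).

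The main obstacle is Step 3: making the inductive division rigorous in the weighted setting. The cohomology of the graded pieces must vanish in exactly the needed range, and that is where the inequality $d>\sum a_i$ has to be used. I would first try to establish $H^1(S,(\Sym^{k}\Om_{P}|_S)^{\vee\vee}(s))=0$ for $s<k$ via the Bott vanishing for weighted projective space.
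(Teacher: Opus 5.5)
As written, the proposal does not close, and Step~3, which you identify as the key step, does not work as described.

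First, Euler's formula gives $\iota_E\,dF=\sum_i a_ix_i\,\partial_iF=d\cdot F$, not $dF$, and this vanishes on the cone. So on $U$ you get $\iota_E(dF\cdot\eta)=dF\cdot\iota_E\eta$, and contracting with $E$ produces no relation that lowers $(m,t)$ to $(m-1,t-d)$. Second, a decomposition $\tilde\omega=dF\cdot\eta+F\cdot\theta$ can exist only if $\tilde\omega$ restricts to $0$ in $\Sym^m\Om^1_U$, i.e.\ only if $\omega=0$; the induction presupposes its conclusion. Third, the appeal to ampleness of $K_S$ and weighted Bott/Kodaira vanishing is misdirected. The lemma you plan to prove first is false in the range you state. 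Already for a smooth $S\subset\PP^3$ with $k=1$, $s=0$, the Euler sequence $0\to\Om^1_{\PP^3}|_S\to\OO_S(-1)^{\oplus4}\to\OO_S\to0$ gives $\CC=H^0(\OO_S)\hookrightarrow H^1(S,\Om^1_{\PP^3}|_S)$. The paper needs this kind of $H^1$-vanishing only for twists $s\le k-d$, where it follows from the Euler sequence once $d\ge2$.

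The fix is that you do not need Step~3 at all. Your Step~2 already proves the theorem, and your reason for distrusting the naive count is mistaken. The lifting obstruction lies in copies of $H^1(U,\OO_U)=H^2_{\mathfrak m}(R)=0$. Since $\bigoplus_{|\alpha|=m}R\,dx^\alpha\to H^0(U,\Sym^m\Om^1_U)$ is $\CC^*$-equivariant, you may take the lift of weight $t$. Then $f_\alpha\in R_{t-\sum_i\alpha_ia_i}$, and $\sum_i\alpha_ia_i\ge|\alpha|=m>t$ for every $\alpha$; no weights can compensate. So every $f_\alpha$ is $0$ and $\omega=0$. Non-uniqueness of the lift is irrelevant, because in weight $t<m$ the whole source, including $dF\cdot(\cdots)$, is zero. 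Together with your Step~1 (Hartogs on the smooth threefold $U$ across the finitely many orbits over stacky points, which plays the role of the paper's stack-to-coarse comparison), this is a complete proof.

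Descended to $\cS$, your argument is dual to the paper's. Put $V=\bigoplus_i\OO_{\cS}(-a_i)$.
\begin{itemize}
\item The paper first takes $\Om^1_{\cP}|_{\cS}=\ker(V\to\OO_{\cS})$ and then divides by the conormal line $\OO_{\cS}(-d)$. This forces a second pass through the Euler sequence to kill $H^1\bigl((\Sym^{m-1}\Om^1_{\cP}|_{\cS})(t-d)\bigr)$, and it uses $d\ge2$.
\item You divide first ($\Om^1_U$ descends to $V/\OO_{\cS}(-d)$) and then use $\Sym^m\Om^1_{\cS}\subset\Sym^m\bigl(V/\OO_{\cS}(-d)\bigr)$. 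Only $H^1(\OO_{\cS}(k))=0$ and $R_k=0$ for $k<0$ enter, with no condition on $d$.
\end{itemize}
Neither route uses $d>\sum_i a_i$.
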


The weighted setting is not a
formal reformulation of the ordinary projective one.  A quasi-smooth weighted
hypersurface is naturally smooth as a Deligne--Mumford stack, while its coarse
space may have isolated cyclic quotient singularities.  Consequently the
naive sheaf $\Om_S^1$ and the reflexive differential sheaf
$\Om_S^{[1]}:=\reflex{\Om_S^1}$ need not have the same global sections:
$\Om_S^1$ may contain torsion supported at the quotient singularities; compare
\cite{GKKP} for reflexive differentials on singular varieties.  Even in the smooth case, the vanishing theorem is not obtained simply by
embedding \(S\) into an ordinary projective space and applying Schneider's
theorem.  Indeed, such an embedding is induced only by a sufficiently
divisible multiple \(\OO_S(q)\) of the natural weighted polarization
\(\OO_S(1)\), and typically lands in a projective space whose dimension is
too large for Schneider's dimension hypothesis.  Moreover, even when the
dimension condition is satisfied, the resulting vanishing is formulated
with respect to the embedding polarization \(\OO_S(q)\) and therefore does
not recover the full range
$$
H^0\!\left(S,\Sym^m\Om_S^1\otimes\OO_S(t)\right)=0,
\qquad t<m,
$$
with respect to the primitive weighted polarization \(\OO_S(1)\).
The proof given here instead works directly with the weighted Euler and
conormal sequences and the cohomology of weighted line bundles.

The proof is carried out on the smooth hypersurface stack; in the
quasi-smooth case, the resulting vanishing is then transferred to the coarse
space by the stack-to-coarse comparison.

\begin{remark}The reflexive formulation in \cref{thm:intro-vanishing}(ii) cannot in general
be replaced by the naive K\"ahler differential sheaf.  We exhibit a
quasi-smooth surface
\[
 S_{16}\subset \PP(1,1,2,5)
\]
with a unique cyclic quotient singularity of type $\frac15(1,2)$ for which
\[
 H^0(S_{16},\Om_{S_{16}}^1)\neq0,
 \qquad
 H^0(S_{16},\Om_{S_{16}}^{[1]})=0.
\]
The nonzero K\"ahler differential is torsion and is supported at the singular
point, hence disappears after taking the reflexive hull.  Thus reflexive
differentials are not merely a convenient replacement on the coarse space:
they are precisely the differentials detected by the smooth stack.
\end{remark}

The second part of the paper concerns the opposite phenomenon.  Let $Y$ be a
normal projective surface with quotient singularities and let
$\widetilde Y\to Y$ be its minimal resolution.  Asega, De Oliveira and Weiss
associate a local asymptotic invariant $h^1_\Omega(y)$ to every quotient
surface singularity and prove the following sufficient condition
\cite{ADW1,ADW2}:
\[
 \sum_{y\in\Sing(Y)}h^1_\Omega(y)
 >
 \frac{c_2(\widetilde Y)-c_1^2(\widetilde Y)}6
 \quad\Longrightarrow\quad
 \Om_{\widetilde Y}^1\ \text{is big}.
\]

We combine this criterion with Kummer maps from weighted projective spaces to
$\PP^3$.  The most concrete consequence is the following.

\begin{theorem}\label{thm:intro-A5}
There exists a degree-$12$ hypersurface
\[
 Y_{12}\subset\PP(1,1,1,2)
\]
with exactly $60$ singularities of type $A_5$ and no other singularities
such that the minimal resolution $\widetilde Y_{12}$ has big cotangent bundle.
\end{theorem}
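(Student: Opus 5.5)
The plan is to build $Y_{12}$ as the pullback of a singular sextic surface in $\PP^3$ under a weighted Kummer map, and then to apply the Asega--De Oliveira--Weiss criterion. For the map I take
\[
 \kappa\colon \PP(1,1,1,2)\to\PP^3,\qquad (x,y,z,w)\mapsto (x^2,y^2,z^2,w).
\]
This is well defined because all four components have weight $2$. The group $(\mu_2)^3$ acts by sign changes on $x,y,z$, and the element $(-1,-1,-1)$ acts as the weighted scalar $\lambda=-1$. So $\kappa$ is a Galois cover of degree $4$ with group $(\mu_2)^2$, branched along the coordinate planes $X=0$, $Y=0$, $Z=0$.

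Given a sextic $F(X,Y,Z,W)$, the surface $Y_{12}=\{F(x^2,y^2,z^2,w)=0\}$ has weighted degree $12$. If the sextic $\{F=0\}$ has singular points away from the branch planes, each such point has $4$ preimages. Since $\kappa$ is étale there, each preimage is a singularity of the same analytic type. I would therefore start from a sextic $F_6\subset\PP^3$ with exactly $15$ singularities of type $A_5$ and no others. Such sextics should come from a classical construction, e.g.\ a Segre-type or symmetric determinantal sextic; the paper's "Segre constructions" suggest exactly this kind of input.

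The first step is to apply a projective change of coordinates so that the following hold:
\begin{enumerate}[label=\textup{(\alph*)}]
\item none of the $15$ singular points lies on $XYZ=0$;
\item the point $(0:0:0:1)$ does not lie on $F_6$, so $Y_{12}$ avoids the $\frac12(1,1,1)$ point of $\PP(1,1,1,2)$;
\item each coordinate plane, and each coordinate line, meets $F_6$ transversally, i.e.\ $F_6\cap\{X=0\}$ is a smooth curve, and similarly for the pairwise intersections.
\end{enumerate}
These are open conditions on $\mathrm{PGL}_4$, so a general choice of coordinates satisfies them. A local computation then shows that along the branch locus $Y_{12}$ is smooth: near a point over $\{X=0\}$, the local equation is $g(x^2,\dots)$ with $\partial_X g\neq0$ or with a transverse tangential derivative. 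Hence $Y_{12}$ is quasi-smooth away from the $60$ preimages, which are all of type $A_5$.

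The second step is the numerics. Since $Y_{12}$ avoids the singular point of $P$, adjunction gives $K_{Y_{12}}=\OO(12-5)=\OO(7)$, and $H^2=12/2=6$. Therefore
\[
 c_1^2(\widetilde Y_{12})=K^2=49\cdot 6=294,
\]
using that the minimal resolution of ADE points is crepant. For $c_2$, the minimal resolution of ADE singularities has the same Euler number as a smoothing. For a quasi-smooth degree-$12$ surface avoiding the singular point, the Chern class computation
\[
 \frac{(1+H)^3(1+2H)}{1+12H}
\]
gives $c_2=(9-60+144)H^2=93\cdot 6=558$. Thus
\[
 \frac{c_2(\widetilde Y_{12})-c_1^2(\widetilde Y_{12})}{6}=\frac{558-294}{6}=44 .
\]
By the criterion of \cite{ADW1,ADW2}, it then suffices to check
\[
 60\,h^1_\Omega(A_5)>44,\quad\text{i.e.}\quad h^1_\Omega(A_5)>\tfrac{11}{15}.
\]
I would read this off from the explicit formula for $h^1_\Omega$ of cyclic quotient singularities $\frac1{n}(1,n-1)$ in \cite{ADW2}, taking $n=6$.

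The main obstacle is the input sextic: producing, or citing, a sextic in $\PP^3$ with exactly $15$ $A_5$ singularities and verifying that it has no further singularities. If this is not available, the fallback is to run the same Kummer argument with a different singular input. For instance, a sextic with fewer $A_5$ points combined with a larger Galois group would still give $60$ points, provided a map from $\PP(1,1,1,2)$ with a larger group realizes it. The genericity conditions in the first step and the value of $h^1_\Omega(A_5)$ are routine by comparison.
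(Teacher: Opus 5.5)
Your reduction is the same as the paper's. You pull back a sextic with $15$ singularities of type $A_5$ along $[x_0:x_1:x_2:w]\mapsto[x_0^2:x_1^2:x_2^2:w]$ after a general change of coordinates, and compare $60\,h^1_\Omega(A_5)$ with $(c_2-c_1^2)/6=44$. Your genericity conditions (a)--(c) and your Chern numbers $294$ and $558$ agree with the paper.

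\textbf{The gap: the input sextic.} There is a genuine gap exactly where you flag the ``main obstacle'': you never produce the sextic. ``Segre-type or symmetric determinantal'' is a guess, and nothing you say shows such a surface has exactly $15$ points of type $A_5$ and no other singularities. Your fallback does not help either. A morphism $\PP(1,1,1,2)\to\PP^3$ that pulls sextics back to weighted degree $12$ is given by forms of weight $2$, so it has degree $(2H)^3=4$. Hence $60$ points of type $A_5$ on $Y_{12}$ force $15$ on the sextic.

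The missing input is the one the paper takes from \cite[Proof of Theorem~1.5]{ADW1}: the degree-$6$ cyclic cover $X_6=\{w^6=L_1\cdots L_6\}\subset\PP^3$ of $\PP^2$, branched along six lines in general position. It is singular exactly where you need:
\begin{itemize}
\item Where $w\neq0$, $\partial_w$ of the equation is nonzero, so the surface is smooth there.
\item Where $w=0$ and exactly one $L_i$ vanishes, the branch curve is smooth, so the surface is again smooth.
\item Over each of the $\binom62=15$ points where two lines meet, the local equation is $uv=w^6$, an $A_5$ point.
\end{itemize}
Since no three lines are concurrent, there are no other singular points, and $[0:0:0:1]\notin X_6$. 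With this sextic, the rest of your argument goes through as written.

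\textbf{Two further points to make explicit.} First, the criterion of \cref{thm:QS} requires $\widetilde Y_{12}$ to be of general type, which you never check. It holds because the minimal resolution of Du Val points is crepant and $K_{Y_{12}}=\OO(7)$ is ample, so $K_{\widetilde Y_{12}}$ is big and nef.

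Second, the inequality $h^1_\Omega(A_5)>11/15$ is the numerical heart of the argument, not a routine lookup to defer. With the value $h^1_\Omega(A_5)=106819/132300\approx0.807$ from \cite{ADW1,ADW2}, one gets $15\,h^1_\Omega(A_5)=106819/8820>11$, which is exactly the check the paper performs.
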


The construction starts from the degree $6$ cyclic cover of $\PP^2$
branched along $6$ lines in general position, a sextic surface in $\PP^3$
with $15$ singularities of type $A_5$ \cite[Proof of Theorem~1.5]{ADW1}.
After a general change of coordinates, the degreen$4$ weighted Kummer
pullback to $\PP(1,1,1,2)$ has weighted degree $12$ and replaces those
$15$ singularities by $60$ singularities of the same analytic type.  The
general numerical mechanism behind this example is developed in
\cref{sec:Kummer}.

Finally, a weighted version of Segre's classical construction
\cite{Segre} gives a uniform existence statement.

\begin{theorem}\label{thm:intro-Segre}
For every integer $r\ge2$ there exist nodal hypersurfaces
\[
 X\subset\PP(1,1,1,r)
\]
whose minimal resolutions have big cotangent bundle.
\end{theorem}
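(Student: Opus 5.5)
The plan is to combine the Asega--De Oliveira--Weiss criterion with a weighted Segre construction. For nodes ($A_1$ singularities) the local invariant $h^1_\Omega$ has the known value $h^1_\Omega(A_1)=1/4$; the factor $1/4$ should be checked against \cite{ADW1}. With this value the criterion becomes
\[
 \frac{\mu}{4}>\frac{c_2(\widetilde X)-c_1^2(\widetilde X)}{6},
\]
where $\mu$ is the number of nodes. So I need nodal hypersurfaces $X_d\subset\PP(1,1,1,r)$ whose node count grows like $d^3$ with a large enough leading constant. The numerical invariants of the resolution are computed as follows.

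\textbf{Numerics.} Take $d$ divisible by $r$, so that $X$ avoids the vertex $(0:0:0:1)$, lies in $P_{\rm reg}$, and has only nodes. Adjunction on $P_{\rm reg}$ gives $K_X=\OO_X(d-r-3)$ and $\OO_X(1)^2=d/r$. Hence
\[
 c_1^2=\frac{d(d-r-3)^2}{r}.
\]
The topological Euler characteristic of a smooth member is obtained from the weighted Euler sequence:
\[
 c_2^{\rm sm}=\frac{d}{r}\Bigl(d^2-d(r+3)+3+3r+r^2\Bigr)
\]
to leading order. Passing to a nodal surface, each node lowers $e$ by $1$, and resolving it adds $1$ back. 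So $c_2(\widetilde X)=c_2^{\rm sm}$ and $c_1^2$ is unchanged, since nodes are crepant. Therefore
\[
 c_2-c_1^2=\frac{d}{r}\bigl((r+3)d+O(r^2)\bigr),
\]
which is only quadratic in $d$. Any family with $\mu\ge c\,d^3/r$ for some fixed $c>0$ therefore beats the bound once $d\gg r$.

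\textbf{Weighted Segre construction.} Choose $d=2rk$. Mimicking Segre, consider
\[
 F=\phi(x_0,x_1,x_2)^2-\lambda\,\prod_{i=1}^{2k}\ell_i(x_0,x_1,x_2,y)=0,
\]
where each $\ell_i=y-g_i(x_0,x_1,x_2)$ is a general weighted form of degree $r$, and $\phi$ is a general form of degree $d/2=rk$ in $x_0,x_1,x_2$ and $y$ (weighted). The singular points of $F=0$ include the points where $\phi=0$ and two of the $\ell$'s vanish. The surfaces $\{\ell_i=\ell_j=0\}$ meet $\{\phi=0\}$ in $(d/2)\cdot r\cdot r/r=rk\cdot r$ points, counted via $\OO(1)^3=1/r$ on $P$. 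This gives
\[
 \mu\ge\binom{2k}{2}\,r^2k\sim 2r^2k^3=\frac{d^3}{4r}.
\]
I then check that these singular points are ordinary nodes and that there are no other singularities, for general choices, by a Bertini argument away from the base locus $\{\phi=\prod\ell_i=0\}$. The local computation is exactly Segre's, since $y$ appears linearly in each $\ell_i$ and the points lie in $P_{\rm reg}$.

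\textbf{Conclusion.} Then $\mu/4\ge d^3/(16r)$, while $(c_2-c_1^2)/6=O(d^2)$. Taking $k$ large enough (depending on $r$) gives the inequality, so $\Om^1_{\widetilde X}$ is big. If the constant from the local invariant turns out to differ, only the lower bound on $k$ changes. The main obstacle is the genericity step: showing that for general $\phi,g_i,\lambda$ the surface has only nodes, and exactly at the expected points. This requires care at points where $\phi=0$ and three of the $\ell_i$ vanish, which I avoid by generality (three general degree-$r$ forms and $\phi$ have no common zero). It also requires verifying that $X$ misses the singular vertex, which holds because $F(0,0,0,1)=-\lambda\neq0$.
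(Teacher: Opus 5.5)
Your proposal is correct and follows essentially the paper's argument: a weighted Segre pencil $\phi^2-\lambda\prod\ell_i$ in $\PP(1,1,1,r)$ with about $d^3/(4r)$ nodes, fed into the Asega--De Oliveira--Weiss criterion against $c_2-c_1^2=O(d^2)$; your degree-$r$ factors $y-g_i$ replace the paper's $2qr$ lines $L_i(x_0,x_1,x_2)$ and give the same leading count ($r^2k^2(2k-1)$ versus the paper's $q^2r(2qr-1)$). The needed corrections are cosmetic and do not affect the asymptotic conclusion: the true value is $h^1_\Omega(A_1)=4/27$, not $1/4$, so the threshold is $\mu>\frac98(c_2-c_1^2)$; the constant term in your $c_2$ formula should be $3r+3$, with no $r^2$; and you should state that $K_X=\OO_X(2rk-r-3)$ is ample for $k\ge2$, so $\widetilde X$ is of general type, as the criterion requires.
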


The quotient-singularity bigness criterion is the numerical input in
\cref{thm:intro-A5,thm:intro-Segre}, but the weighted geometry is what makes
it effective here.  A weighted Kummer map multiplies prescribed quotient
singularities while keeping the degree and Chern numbers explicit, whereas
the weighted Segre construction produces, uniformly in $r$, enough nodes to
overcome the global Chern-number threshold.  These constructions therefore
place special singular members with big cotangent resolutions inside weighted
linear systems whose smooth members satisfy the vanishing theorem
\cref{thm:intro-vanishing}.  Simultaneous resolution of a Du Val smoothing
\cite{Brieskorn} then allows these two results to be compared inside a smooth
family.  This leads to the following weighted-projective deformation-jumping
statement, in the spirit of Bogomolov--De Oliveira
\cite{BdO2006,BdO2008}.

\begin{corollary}[Deformation jumping]\label{cor:intro-jumping}
There exist deformation-equivalent smooth projective surfaces $X$ and $X'$ such
that $\Om_X^1$ is big and hence
\[
 h^0\!\left(X,\Sym^m\Om_X^1\right)
\]
has cubic growth in $m$, whereas
\[
 H^0\!\left(X',\Sym^m\Om_{X'}^1\right)=0
 \qquad(m\ge1).
\]
In particular, the symmetric plurigenera exhibit an asymptotic jump and are
not deformation invariant.
\end{corollary}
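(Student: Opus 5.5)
Combine \cref{thm:intro-vanishing}(i) with either \cref{thm:intro-A5} or \cref{thm:intro-Segre}, using Brieskorn's simultaneous resolution to put the two surfaces in one smooth family. Take $P=\PP(1,1,1,2)$ and $d=12$, so that $d=12>5=\sum a_i$, and let $Y_0=Y_{12}$ be the surface of \cref{thm:intro-A5}. Its only singularities are the $60$ points of type $A_5$, all Du Val, and its minimal resolution $X:=\widetilde Y_{12}$ has $\Om_X^1$ big. Bigness of $\Om^1_X$ means that $\OO_{\PP(\Om^1_X)}(1)$ is big on the threefold $\PP(\Om_X^1)$. Since $h^0(X,\Sym^m\Om_X^1)=h^0(\PP(\Om^1_X),\OO(m))$, these dimensions grow like $c\,m^3$ with $c>0$, which is the cubic growth claimed.

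Next I construct the smooth partner. First I check that a general member $Y'$ of $|\OO_P(12)|$ is smooth and lies in $P_{\rm reg}$. The only singular point of $P$ is $p=[0:0:0:1]$. Because $2\mid 12$, the monomial $x_3^6$ has degree $12$, so a general $Y'$ avoids $p$. Away from $p$, the linear system is base-point free, so Bertini shows $Y'$ is smooth. Then \cref{thm:intro-vanishing}(i) applied to $X':=Y'$ gives $H^0(X',\Sym^m\Om^1_{X'})=0$ for all $m\ge1$.

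It remains to show that $X$ and $X'$ are deformation equivalent. Choose a general pencil $Y_s=\{sF'+(1-s)F_{12}=0\}$ over a small disc $\Delta$. For $s\ne0$ near $0$, $Y_s$ is smooth, since smoothness is an open condition and $Y_{1}=Y'$ is smooth. The surfaces $Y_s$ with $s\ne 0$ are all deformation equivalent to $Y'$, because the locus of smooth members of $|\OO_P(12)|$ missing $p$ is a Zariski open set in a projective space, hence connected. The family $\mathcal Y\to\Delta$ is flat, and near each singular point of $Y_0$ it induces a deformation of an $A_5$ singularity. By Brieskorn \cite{Brieskorn}, after a finite base change $\Delta'\to\Delta$, $s=u^N$, there is a simultaneous resolution $\widetilde{\mathcal Y}\to\mathcal Y\times_\Delta\Delta'$. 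This is a smooth proper family whose central fibre is the minimal resolution $X$ and whose general fibre is $Y_{u^N}\cong$ a smooth member. The result may a priori be only an algebraic space, but it is a proper smooth family of complex manifolds, and Ehresmann's theorem then gives deformation equivalence of $X$ and $X'$.

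\emph{Main obstacle.} Brieskorn's theorem is local: it applies to the semi-universal deformation of each $A_5$ point. The step needing care is globalizing it, since the induced map from $\Delta$ to the product of local semi-universal bases must be compatible with the base change to the Weyl cover. I would pull back along the local maps. The Weyl cover $\mathfrak h\to\mathfrak h/W$ is finite, so a single finite base change of $\Delta$ lifts all $60$ local maps. The local simultaneous resolutions then glue with the identity away from the singular points, because a small resolution is unique over the smooth locus. Projectivity of the total space is not needed, since the conclusion concerns fibres only. Alternatively, one can use \cref{thm:intro-Segre} in the nodal case, where a smoothing of $A_1$ points needs only a degree-$2$ base change and the construction is explicit.
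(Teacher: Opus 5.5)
Your proposal is correct and follows the paper's own route. The paper takes a one-parameter smoothing of $Y_{12}\subset\PP(1,1,1,2)$ (or of a nodal weighted Segre surface) inside its weighted linear system, applies Brieskorn simultaneous resolution after a finite base change, and then applies the smooth vanishing theorem to the nearby members in $P_{\rm reg}$. Your version also makes explicit several steps the paper leaves implicit: that a general member is smooth and avoids $[0:0:0:1]$, that the smooth locus of the linear system is connected, that one base change lifts all $60$ local maps to the Weyl cover, and that the local resolutions glue.
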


\medskip
\noindent
\textbf{Organization.}
In \cref{sec:prelim} we recall weighted projective spaces, weighted
hypersurface stacks, reflexive symmetric powers, and the cohomology facts used
later.  In \cref{sec:vanishing} we prove \cref{thm:intro-vanishing},
illustrate it on a smooth Horikawa hypersurface, and give the torsion
counterexample for naive K\"ahler differentials.  In
\cref{sec:bigness} we recall the quotient-singularity bigness criterion.  In
\cref{sec:Kummer} we study Kummer pullbacks and prove
\cref{thm:intro-A5}.  The weighted Segre construction is carried out in
\cref{sec:Segre}.  We conclude with the resulting deformation-jumping
phenomenon.

\section{Weighted hypersurfaces and reflexive differentials}
\label{sec:prelim}

Throughout the paper all varieties are defined over $\CC$.

\subsection{Weighted projective space}

Let $a_0,\dots,a_3$ be positive integers.  The weighted projective
three-space
\[
 P=\PP(a_0,a_1,a_2,a_3)
\]
is the coarse space associated with the quotient stack
\[
 \cP=
 \left[(\mathbb A^4\setminus\{0\})/\mathbb G_m\right],
\]
where
\[
 \lambda\cdot(x_0,\dots,x_3)
 =
 (\lambda^{a_0}x_0,\dots,\lambda^{a_3}x_3).
\]
We assume that $P$ is well formed, i.e.
\[
 \gcd(a_0,\dots,\widehat{a_i},\dots,a_3)=1
 \quad\text{for every }i.
\]
The stack $\cP$ is smooth and the coarse space $P$ has only cyclic quotient
singularities; see \cite{Dolgachev,Fletcher} and, for quotient stacks and
coarse moduli spaces, \cite{AOV,Vistoli}.  We write $\OO_{\cP}(k)$ for the stack line bundle of weight
$k$ and $\OO_P(k)$ for the corresponding rank-one reflexive sheaf on the
coarse space.  The canonical class is
\[
 K_{\cP}=\OO_{\cP}\!\left(-\sum_{i=0}^3a_i\right).
\]
If $H=c_1(\OO_{\cP}(1))$, then
\[
 H^3=\frac1{a_0a_1a_2a_3}.
\]

\subsection{Quasi-smooth hypersurfaces and the hypersurface stack}

Let $F\in\CC[x_0,x_1,x_2,x_3]$ be weighted homogeneous of degree $d$, and
let $S=(F=0)\subset P$.  We say that $S$ is \emph{quasi-smooth} if the affine
cone $(F=0)\subset\mathbb A^4$ is smooth away from the origin.  Equivalently,
the associated hypersurface stack
\[
 \cS=
 \left[((F=0)\setminus\{0\})/\mathbb G_m\right]
 \subset\cP
\]
is smooth.  Its coarse space $S$ is normal and has at worst isolated cyclic
quotient singularities in the situations considered below; see
\cite{Dolgachev,Fletcher}.
We call $S$ \emph{well formed} if the coarse moduli map $p:\cS\to S$ is an isomorphism in codimension one; equivalently, $S$ contains no codimension-one stacky locus.

Weighted adjunction on $\cS$ gives
\begin{equation}\label{eq:adjunction}
 K_{\cS}
 =
 \OO_{\cS}\!\left(d-\sum_{i=0}^3a_i\right).
\end{equation}
When the coarse hypersurface is smooth and contained in $P_{\rm reg}$, this
is the usual adjunction formula on $S$.

\subsection{Reflexive symmetric differentials}

For a normal surface $S$ define
\[
 \Om_S^{[1]}:=\reflex{\Om_S^1},
 \qquad
 \Sym^{[m]}\Om_S^1:=
 \reflex{\Sym^m\Om_S^1}.
\]
If $j:S_{\rm reg}\hookrightarrow S$ denotes the inclusion, then
\[
 \Sym^{[m]}\Om_S^1
 \simeq
 j_*\Sym^m\Om_{S_{\rm reg}}^1.
\]
This is the usual codimension-two extension property for reflexive sheaves;
compare \cite{GKKP}.  For a well-formed quasi-smooth weighted hypersurface, the coarse moduli map
$p:\cS\to S$ is an isomorphism in codimension one. Since the stabilizers are
finite and linearly reductive, the local quotient description of tame
stacks gives \cite[Theorem~3.2]{AOV}
\begin{equation}\label{eq:stack-reflexive}
 p_*\Sym^m\Om_{\cS}^1
 \simeq
 \Sym^{[m]}\Om_S^1.
\end{equation}
Likewise, for every integer $t$,
\[
 p_*\bigl(\Sym^m\Om_{\cS}^1\otimes\OO_{\cS}(t)\bigr)
 \simeq
 \left(\Sym^m\Om_S^1\otimes\OO_S(t)\right)^{\vee\vee}.
\]

\section{Vanishing of symmetric differentials}
\label{sec:vanishing}

We first record the line-bundle cohomology vanishings needed below.
\subsection{Line-bundle cohomology}

\begin{lemma}\label{lem:H1line}
Let $\cS\subset\cP$ be a quasi-smooth weighted hypersurface. Then
\[
 H^0(\cS,\OO_{\cS}(k))=0 \quad (k<0),
 \qquad
 H^1(\cS,\OO_{\cS}(k))=0 \quad (k\in\mathbb Z).
\]
\end{lemma}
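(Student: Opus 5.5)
The plan is to compute the cohomology on $\cS$ through the graded coordinate ring. Let $R=\CC[x_0,\dots,x_3]$ with $\deg x_i=a_i$, and let $A=R/(F)$. For the quotient stack $\cP=[(\mathbb A^4\setminus\{0\})/\mathbb G_m]$, the cohomology of $\OO_{\cP}(k)$ is the weight-$k$ part of the cohomology of the structure sheaf on $\mathbb A^4\setminus\{0\}$. This holds because $\mathbb G_m$ is linearly reductive, so taking invariants is exact. In the same way,
\[
 H^i(\cS,\OO_{\cS}(k))=H^i\bigl(C\setminus\{0\},\OO\bigr)_k ,
\]
where $C=\operatorname{Spec}A$ is the affine cone.

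\textbf{Step 1 (ambient space).} I will first record the standard weighted computation. We have $H^0(\cP,\OO(k))=R_k$, and $H^1=H^2=0$ for all $k$. The only nonzero higher group is $H^3(\cP,\OO(k))$, which is dual to $R_{-k-\sum a_i}$. These statements follow from the local cohomology $H^i_{\mathfrak m}(R)$, which is concentrated in degree $i=4$, via the exact sequence
\[
 0\to H^0_{\mathfrak m}\to R\to \Gamma(\mathbb A^4\setminus 0)\to H^1_{\mathfrak m}\to0,
 \qquad H^i(\mathbb A^4\setminus 0)=H^{i+1}_{\mathfrak m}\ (i\ge1).
\]

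\textbf{Step 2 (hypersurface).} Since $A$ is a hypersurface ring, it is Cohen--Macaulay of dimension $3$. Therefore $H^i_{\mathfrak m}(A)=0$ for $i\le2$. The same exact sequence then gives two facts. First, $\Gamma(C\setminus 0,\OO)=A$, so
\[
 H^0(\cS,\OO_{\cS}(k))=A_k,
\]
and this vanishes for $k<0$ because $A$ is nonnegatively graded with $A_0=\CC$. Second, $H^1(C\setminus0,\OO)=H^2_{\mathfrak m}(A)=0$, so $H^1(\cS,\OO_{\cS}(k))=0$ for every $k$.

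As an alternative route, one can use the twisted ideal sequence
\[
 0\to\OO_{\cP}(k-d)\xrightarrow{F}\OO_{\cP}(k)\to\OO_{\cS}(k)\to0 .
\]
Its long exact sequence, together with $H^1(\cP,\cdot)=H^2(\cP,\cdot)=0$ from Step 1, gives $H^1(\cS,\OO_{\cS}(k))=0$ immediately. It also gives $H^0(\cS,\OO_{\cS}(k))=R_k/FR_{k-d}=0$ for $k<0$.

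\textbf{Main obstacle.} The only delicate point is justifying the identification of stack cohomology with graded pieces of the cohomology of the punctured cone. This step does not need quasi-smoothness; quasi-smoothness only ensures that $\cS$ is a smooth DM stack. The identification goes as follows. The map $q:C\setminus\{0\}\to\cS$ is a $\mathbb G_m$-torsor, so $q_*\OO=\bigoplus_k\OO_{\cS}(k)$. Moreover $q$ is affine, so
\[
 H^i(C\setminus 0,\OO)=\bigoplus_k H^i(\cS,\OO_{\cS}(k)).
\]
I will cite this, together with the cohomology of $\cP$, from \cite{Dolgachev}.
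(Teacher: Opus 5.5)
Your proposal is correct and is essentially the paper's proof. You identify $H^i(\cS,\OO_{\cS}(k))$ with the weight-$k$ part of the cohomology of the punctured affine cone. You then use that $A=\CC[x_0,\dots,x_3]/(F)$ is Cohen--Macaulay of dimension $3$, so $H^j_{\mathfrak m}(A)=0$ for $j\le 2$, which gives $H^0=A_k$ and $H^1=H^2_{\mathfrak m}(A)_k=0$. Your justification of the identification (the map $q$ is an affine $\mathbb G_m$-torsor), together with the alternative route through the ideal sequence, supplies details that the paper only cites as the ``standard graded description.''
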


\begin{proof}
This is the stack version of the standard cohomology computation for weighted complete intersections; compare \cite[Theorem~3.2.4(iii) and Section~3.4.3]{Dolgachev} and \cite[Lemma~7.1]{Fletcher}. We include the short argument for completeness.

Let
$
 R=\CC[x_0,x_1,x_2,x_3]/(F),
 \quad
 \deg x_i=a_i.
$
By the standard graded description,
$$ H^0(\mathcal S,\mathcal O_{\mathcal S}(k))\simeq R_k. $$

Since \(R\) is positively graded, this vanishes for \(k<0\).

Since $R$ is a hypersurface ring, it is Cohen--Macaulay of dimension $3$.
Hence, for the irrelevant ideal $R_+$,
\[
 H^j_{R_+}(R)=0
 \qquad (j<3).
\]
The standard graded local-cohomology description of cohomology on weighted
projective stacks gives (see also \cite{Dolgachev})
\[
 H^1(\cS,\OO_{\cS}(k))
 \simeq
 H^2_{R_+}(R)_k=0.
\]
\end{proof}

\begin{remark}\label{rem:qzero}
In particular, the coarse space of a quasi-smooth weighted hypersurface has
$q=h^1(\OO_S)=0$.  Thus quotient singularities do not create positive
irregularity in this setting.
\end{remark}

We now prove the vanishing theorem on the smooth hypersurface stack and then
pass to the coarse space to obtain \cref{thm:intro-vanishing}.

Set
\[
 W:=a_0+a_1+a_2+a_3.
\]

\subsection{The stack calculation}

All sequences in this subsection are sequences of vector bundles on the smooth
hypersurface stack $\cS$.  For a vector bundle $F$ on $\cS$, we write
\[
 F(k):=F\otimes\OO_{\cS}(k).
\]
Restrict the weighted Euler sequence to $\cS$:
\begin{equation}\label{eq:Euler}
 0\longrightarrow A
 \longrightarrow E
 \xrightarrow{\epsilon} \OO_{\cS}
 \longrightarrow0,
 \qquad
 E:=\bigoplus_{i=0}^3\OO_{\cS}(-a_i),
\end{equation}
where $A=\Om_{\cP}^1|_{\cS}$.  For every $r\ge1$, contraction with $\epsilon$
defines
\[
 c_{\epsilon}:\Sym^rE\longrightarrow\Sym^{r-1}E,
 \qquad
 c_{\epsilon}(e_1\cdots e_r)
 =\sum_{i=1}^r \epsilon(e_i)e_1\cdots\widehat{e_i}\cdots e_r.
\]
Locally choose a splitting \(E\simeq A\oplus \mathcal O_{\mathcal S}e\) such that \(\epsilon(e)=1\). Then

$$
\operatorname{Sym}^rE
\simeq
\bigoplus_{j=0}^r \operatorname{Sym}^{r-j}A\cdot e^j,
$$

and the contraction map satisfies

$$
c_\epsilon(ae^j)=j\,ae^{j-1},
\qquad
a\in \operatorname{Sym}^{r-j}A.
$$

Thus, after identifying \(e\) with a local polynomial variable, \(c_\epsilon\) is differentiation with respect to that variable. Since the ground field has characteristic zero, \(c_\epsilon\) is surjective and its kernel is \(\operatorname{Sym}^rA\).

Therefore
\begin{equation}\label{eq:symEuler}
 0\longrightarrow\Sym^rA
 \longrightarrow\Sym^rE
 \xrightarrow{c_{\epsilon}}\Sym^{r-1}E
 \longrightarrow0
\end{equation}
is exact.
The conormal sequence of the smooth hypersurface stack is
\begin{equation}\label{eq:conormal}
 0\longrightarrow\OO_{\cS}(-d)
 \longrightarrow A
 \longrightarrow\Om_{\cS}^1
 \longrightarrow0.
\end{equation}
Since $\OO_{\cS}(-d)$ is a line subbundle of $A$, the conormal sequence induces the exact sequence
\begin{equation}\label{eq:symconormal}
 0\longrightarrow
 (\Sym^{m-1}A)(-d)
 \longrightarrow
 \Sym^mA
 \longrightarrow
 \Sym^m\Om_{\cS}^1
 \longrightarrow0.
\end{equation}

\begin{theorem}\label{thm:stack-vanishing}
Assume $d>W$.  For every $m\ge1$ and every integer $t<m$,
\[
 H^0\!\left(\cS,(\Sym^m\Om_{\cS}^1)(t)\right)=0.
\]
\end{theorem}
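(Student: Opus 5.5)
The plan is to run a Schneider-type long exact sequence argument on the smooth stack $\cS$, using only \cref{lem:H1line} and the exact sequences \eqref{eq:symEuler} and \eqref{eq:symconormal}. Fix $m\ge1$ and $t<m$, and twist \eqref{eq:symconormal} by $\OO_{\cS}(t)$. This gives
\[
 0\to(\Sym^{m-1}A)(t-d)\to(\Sym^mA)(t)\to(\Sym^m\Om_{\cS}^1)(t)\to0 .
\]
From the long exact sequence, it suffices to prove two vanishings:
\[
 H^0\bigl(\cS,(\Sym^mA)(t)\bigr)=0
 \quad\text{and}\quad
 H^1\bigl(\cS,(\Sym^{m-1}A)(t-d)\bigr)=0 .
\]

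For the first vanishing, \eqref{eq:symEuler} with $r=m$ shows that $\Sym^mA\subset\Sym^mE$. Since
\[
 \Sym^mE=\bigoplus_{i_1\le\dots\le i_m}\OO_{\cS}\bigl(-a_{i_1}-\dots-a_{i_m}\bigr),
\]
the sheaf $(\Sym^mE)(t)$ is a sum of line bundles $\OO_{\cS}(k)$ with $k\le t-m<0$, because every $a_i\ge1$. Hence $H^0$ vanishes by \cref{lem:H1line}, and so does $H^0((\Sym^mA)(t))$.

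For the second vanishing, set $r=m-1$ and $s=t-d$. If $r=0$, the group is $H^1(\OO_{\cS}(s))$, which vanishes by \cref{lem:H1line}. If $r\ge1$, twist \eqref{eq:symEuler} by $\OO_{\cS}(s)$. The long exact sequence contains
\[
 H^0\bigl((\Sym^{r-1}E)(s)\bigr)\to H^1\bigl((\Sym^rA)(s)\bigr)\to H^1\bigl((\Sym^rE)(s)\bigr).
\]
The right-hand group is a sum of $H^1$ of line bundles, so it vanishes by \cref{lem:H1line}. The left-hand group is a sum of $H^0(\OO_{\cS}(k))$ with
\[
 k\le s-(r-1)=t-d-m+2\le 1-d+0<0 ,
\]
so it also vanishes. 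Therefore $H^1((\Sym^{m-1}A)(t-d))=0$, which completes the argument.

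I do not expect a serious obstacle. The only points to check carefully are the following. First, the splitting of $\Sym^rE$ into line bundles and the degree bookkeeping, where each factor contributes weight at least $1$. Second, that \eqref{eq:symconormal} is exact on the stack; this is already established above. Third, the edge case $m=1$. The hypothesis $d>W$ enters only weakly here, since the estimate uses only $d\ge2$, and positivity of $d-W$ (that is, of $K_{\cS}$) is not actually needed for this step. If something is missed, the place to re-examine is whether the $H^1$ vanishing of \cref{lem:H1line} legitimately applies to every summand of $(\Sym^rE)(s)$. It does, since that lemma holds for all $k\in\mathbb Z$.
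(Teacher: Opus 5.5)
Your proposal is correct and follows the paper's proof essentially step for step: you twist the symmetric conormal sequence by $\OO_{\cS}(t)$, kill $H^0((\Sym^mA)(t))$ through the inclusion into $\Sym^mE$, and kill $H^1((\Sym^{m-1}A)(t-d))$ through the twisted symmetric Euler sequence and \cref{lem:H1line}, treating $m=1$ separately. Your observation that only $d\ge2$ is used also matches the remark the paper makes right after the proof.
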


\begin{proof}
Every direct summand of $\Sym^mE$ is of the form
\[
 \OO_{\cS}(-\ell),
 \qquad
 \ell=\sum_{i=0}^3k_ia_i,
 \quad
 \sum_{i=0}^3k_i=m.
\]
Since $a_i\ge1$, we have $\ell\ge m$.  Hence $t<m$ implies
$t-\ell<0$. By \cref{lem:H1line},
\begin{equation}\label{eq:H0symA}
 H^0(\cS,(\Sym^mE)(t))=0,
 \qquad
 H^0(\cS,(\Sym^mA)(t))=0.
\end{equation}

Twisting \eqref{eq:symconormal} by $\OO_{\cS}(t)$, it remains to show
\[
 H^1\!\left(\cS,(\Sym^{m-1}A)(t-d)\right)=0.
\]
For $m=1$ this is \cref{lem:H1line}.  Suppose $m\ge2$ and put
$r=m-1$.  Twist \eqref{eq:symEuler} by $\OO_{\cS}(t-d)$:
\[
 0\to(\Sym^rA)(t-d)
 \to(\Sym^rE)(t-d)
 \to(\Sym^{r-1}E)(t-d)
 \to0.
\]
Since $(\Sym^rE)(t-d)$ is a direct sum of line bundles, \cref{lem:H1line} gives
\[
 H^1(\cS,(\Sym^rE)(t-d))=0.
\]
Every summand of $(\Sym^{r-1}E)(t-d)$ has degree
\[
 t-d-\ell',
 \qquad
 \ell'\ge r-1=m-2.
\]
Since \(t\le m-1\) and \(\ell'\ge m-2\), we have
$$
t-d-\ell'\le 1-d<0.
$$
Hence
$$
H^0(\cS,(\Sym^{r-1}E)(t-d))=0.
$$
The long exact cohomology sequence now gives
\[
 H^1(\cS,(\Sym^rA)(t-d))=0.
\]
Together with \eqref{eq:H0symA} and \eqref{eq:symconormal}, this proves the
claim.
\end{proof}

\begin{remark}
The cohomological argument above in fact only uses $d\ge2$. We retain the stronger hypothesis $d>W$ throughout the main vanishing theorem because this is the range relevant to the general type and deformation-theoretic applications later in the paper.
\end{remark}

\begin{corollary}\label{cor:coarse-vanishing}
Under the assumptions of \cref{thm:stack-vanishing}, suppose in addition that $S$ is well formed.
\[
 H^0\!\left(
 S,
 \left(\Sym^m\Om_S^1\otimes\OO_S(t)\right)^{\vee\vee}
 \right)=0
 \qquad (m\ge1,\ t<m).
\]
In particular $H^0(S,\Sym^{[m]}\Om_S^1)=0$ for all $m\ge1$.
\end{corollary}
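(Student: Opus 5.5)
The plan is to deduce the statement directly from \cref{thm:stack-vanishing} by pushing forward along the coarse moduli map $p:\cS\to S$. The point is that global sections on the coarse space $S$ of a pushforward $p_*\mathcal F$ coincide with global sections of $\mathcal F$ on the stack $\cS$. Concretely, for any coherent sheaf $\mathcal F$ on $\cS$ we have
\[
 H^0(S,p_*\mathcal F)=H^0(\cS,\mathcal F),
\]
simply by the definition of pushforward as $\Gamma(U,p_*\mathcal F)=\Gamma(p^{-1}U,\mathcal F)$. We take $U=S$.

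First, I will apply this with $\mathcal F=\Sym^m\Om_{\cS}^1\otimes\OO_{\cS}(t)$, for $m\ge1$ and $t<m$. Because $S$ is well formed and quasi-smooth, the identification stated after \eqref{eq:stack-reflexive} applies:
\[
 p_*\bigl(\Sym^m\Om_{\cS}^1\otimes\OO_{\cS}(t)\bigr)\simeq\left(\Sym^m\Om_S^1\otimes\OO_S(t)\right)^{\vee\vee}.
\]
Taking global sections, we get
\[
 H^0\!\left(S,\left(\Sym^m\Om_S^1\otimes\OO_S(t)\right)^{\vee\vee}\right)\simeq H^0\!\left(\cS,(\Sym^m\Om_{\cS}^1)(t)\right).
\]
The right-hand side vanishes by \cref{thm:stack-vanishing}, since $d>W$.

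The ``in particular'' assertion is the case $t=0$, which is allowed because $0<m$. Here $\OO_S(0)=\OO_S$, so the reflexive hull is just $\Sym^{[m]}\Om_S^1$. Alternatively, one can invoke \eqref{eq:stack-reflexive} directly.

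I expect the main (mild) obstacle to be justifying the twisted pushforward identification, in case one does not want to simply cite it. The argument would run as follows.
\begin{itemize}
\item Since $p$ is an isomorphism over $S_{\rm reg}$ minus finitely many points, and well formedness rules out codimension-one stacky locus, the two sheaves agree on the complement $U$ of a finite set of points.
\item The target $\left(\Sym^m\Om_S^1\otimes\OO_S(t)\right)^{\vee\vee}$ is reflexive on a normal surface, so it equals $j_*$ of its restriction to $U$.
\item For the source: $p_*$ of a vector bundle on a tame stack is torsion-free. It is also $S_2$, because the local quotient description $[V/G]$ with $V$ smooth gives $G$-invariants of a reflexive module on $V$, and these are reflexive.
\end{itemize}
Two reflexive sheaves that agree off codimension two are isomorphic, which gives the identification.
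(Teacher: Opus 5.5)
Your proposal is correct and follows the paper's argument. You identify $H^0\bigl(S,(\Sym^m\Om_S^1\otimes\OO_S(t))^{\vee\vee}\bigr)$ with $H^0\bigl(\cS,(\Sym^m\Om_{\cS}^1)(t)\bigr)$ via the twisted form of \eqref{eq:stack-reflexive}, invoke \cref{thm:stack-vanishing}, and take $t=0$ for the final assertion; your extra sketch of the pushforward identification (agreement off finitely many points, plus reflexivity of invariants of a free module on a local chart $[V/G]$) is a sound supplement to the paper's bare citation of the tame-stack local structure theorem.
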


\begin{proof}
Apply \eqref{eq:stack-reflexive} and take global sections.
\end{proof}

\begin{corollary}\label{cor:smooth-vanishing}
If in addition $S$ is smooth and contained in $P_{\rm reg}$, then
\[
 H^0\!\left(S,\Sym^m\Om_S^1\otimes\OO_S(t)\right)=0
 \qquad (m\ge1,\ t<m).
\]
\end{corollary}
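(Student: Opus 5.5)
The plan is to deduce the statement from \cref{cor:coarse-vanishing}, or directly from \cref{thm:stack-vanishing}. The point is that when $S$ is smooth and contained in $P_{\rm reg}$, the stack and the coarse space agree over $S$, and the naive sheaves coincide with their reflexive hulls.

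First, I would show that the coarse moduli map $p:\cS\to S$ is an isomorphism. Since $S\subset P_{\rm reg}$, at every point of $S$ the stabilizer of the corresponding $\mathbb G_m$-orbit in $(F=0)\setminus\{0\}$ is trivial. A point $x$ with nontrivial stabilizer $\mu_k$ has $x_i=0$ for every $i$ with $k\nmid a_i$, so it lies in the locus of $P$ where the coarse space is singular (by well-formedness, the stacky locus of $\cP$ maps to $\Sing P$), and this locus is excluded. Hence $\cS$ is an algebraic space with trivial stabilizers, so $\cS\to S$ is an isomorphism. In particular $S$ is smooth, in agreement with the hypothesis, and $S$ is automatically well formed.

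Second, under this identification, $\Om^1_{\cS}=p^*\Om^1_S$, and $\OO_{\cS}(t)$ corresponds to $\OO_S(t)$. The latter is genuinely invertible, because $\OO_P(t)$ is invertible on $P_{\rm reg}$. Consequently $\Sym^m\Om^1_S\otimes\OO_S(t)$ is locally free on the smooth surface $S$, hence reflexive, and so it equals its double dual. Combining \eqref{eq:stack-reflexive} with \cref{thm:stack-vanishing} (or simply taking global sections along the isomorphism $p$) gives
\[
H^0\!\left(S,\Sym^m\Om_S^1\otimes\OO_S(t)\right)=H^0\!\left(\cS,(\Sym^m\Om_{\cS}^1)(t)\right)=0 .
\]

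The main obstacle is the first step: justifying that $S\subset P_{\rm reg}$ forces trivial stabilizers along $\cS$, so that $\OO_S(t)$ is the pullback-compatible invertible sheaf. If the stabilizer argument turns out to be delicate, the fallback is to avoid the isomorphism claim altogether. Since $S$ is smooth, the sheaf above is locally free, so it is reflexive. The assertion then follows directly from \cref{cor:coarse-vanishing} once one checks that $S$ is well formed. Well-formedness holds because $S$ avoids $\Sing P$, which contains all codimension-one stacky loci of $S$.
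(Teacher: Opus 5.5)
Your proposal is correct and follows the paper's proof. The paper likewise uses $S\subset P_{\rm reg}$ to identify $\cS$ with $S$ and $\OO_{\cS}(t)$ with $\OO_S(t)$, then reads the vanishing off \cref{thm:stack-vanishing}; your stabilizer argument (for well-formed $P$ the stacky locus is exactly $\Sing P$) supplies the justification the paper leaves implicit, and your fallback through \cref{cor:coarse-vanishing} is valid but unnecessary.
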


\begin{proof}
Since $S\subset P_{\rm reg}$, the hypersurface stack $\cS$ is canonically identified with $S$, and $\OO_{\cS}(t)=\OO_S(t)$. The assertion follows directly from \cref{thm:stack-vanishing}.
\end{proof}

\begin{proof}[Proof of \cref{thm:intro-vanishing}]
Part~(ii) is \cref{cor:coarse-vanishing}, obtained from
\cref{thm:stack-vanishing} via the stack-to-coarse comparison
\eqref{eq:stack-reflexive}.  Part~(i) is \cref{cor:smooth-vanishing}.
\end{proof}

\subsection{A smooth Horikawa hypersurface}

\begin{example}\label{ex:Horikawa}
Consider
\[
 X_8=
 \{x_0^8+x_1^8+x_2^8+z^2=0\}
 \subset\PP(1,1,1,4).
\]
The ambient weighted projective space has a unique singular point
$[0:0:0:1]$, which does not lie on $X_8$.  The affine cone of $X_8$ is
smooth away from the origin, hence $X_8$ is a smooth surface contained in
$P_{\rm reg}$.  Weighted adjunction gives
$K_{X_8}=\OO_{X_8}(1)$, and
\[
 K_{X_8}^2=2,
 \qquad
 p_g(X_8)=3,
 \qquad
 q(X_8)=0.
\]
In particular
\[
 K_{X_8}^2=2p_g(X_8)-4,
\]
so $X_8$ is a Horikawa surface on the Noether line.  Since
$8>1+1+1+4$, \cref{cor:smooth-vanishing} gives
$$
H^0\!\left(
X_8,\Sym^m\Om_{X_8}^1\otimes\OO_{X_8}(t)
\right)=0
\qquad (m\ge1,\ t<m),
$$

\end{example}

\begin{remark}
Brotbek--De Oliveira--Rousseau \cite{BOR2026} study symmetric
differentials on double covers of rational surfaces and, in particular, on
Horikawa surfaces.  The surface \(X_8\) above is a Horikawa surface with
\(p_g=3\), realized as a smooth double cover of \(\PP^2\) branched along a
smooth octic. In the notation of \cite{BOR2026}, it belongs to the
\(\PP^2\)-double-cover stratum \(\mathcal H_3^{(\infty)}\).
Their vanishing theorem for smooth double covers of \(\PP^2\) therefore
implies

$$
H^0(X_8,\Sym^m\Om_{X_8}^1)=0
\qquad (m\ge1).
$$

Note the vanishing obtained here is stronger for this example 
with respect to the natural weighted polarization
\(\OO_{X_8}(1)=K_{X_8}\).
\end{remark}

\subsection{Naive K\"ahler differentials: a counterexample}
\label{subsec:torsion-example}

We now show that the reflexive hull in
\cref{cor:coarse-vanishing} cannot in general be omitted.

\begin{example}\label{ex:torsion}
Let
\[
 S=
 \{z_3^3z_0+z_2^8+z_0^{16}+z_1^{16}=0\}
 \subset\PP(1,1,2,5).
\]
Then $S$ is quasi-smooth.  Indeed, the partial derivatives of the affine cone
equation are
\[
 z_3^3+16z_0^{15},\qquad
 16z_1^{15},\qquad
 8z_2^7,\qquad
 3z_3^2z_0,
\]
and they vanish simultaneously only at the origin.

The point $P_2=[0:0:1:0]$ does not lie on $S$, while
$P_3=[0:0:0:1]$ does.  On the index-one chart over $P_3$ the group
$\mu_5$ acts by
\[
 \zeta\cdot(x_0,x_1,x_2)
 =
 (\zeta x_0,\zeta x_1,\zeta^2x_2),
\]
and the lifted hypersurface is
\[
 x_0+x_2^8+x_0^{16}+x_1^{16}=0.
\]
Since the derivative with respect to $x_0$ is nonzero at the origin, the
completed local germ of the coarse surface is
\[
 \widehat{\OO}_{S,P_3}
 \simeq
 \CC[[u,v]]^{\mu_5},
 \qquad
 \zeta\cdot(u,v)=(\zeta u,\zeta^2v).
\]
Thus $P_3$ is a cyclic quotient singularity of type $\frac15(1,2)$ and is the
unique singular point of $S$.

We now exhibit torsion in the K\"ahler differential module of this quotient.
Put
\[
 R=\CC[u,v]^{\mu_5}.
\]
The invariant ring is generated by
\[
 X=u^5,\qquad
 Y=u^3v,\qquad
 Z=uv^2,\qquad
 T=v^5
\]
with relations
\[
 Y^2-XZ=0,\qquad
 Z^3-YT=0,\qquad
 YZ^2-XT=0.
\]
In $\Om_R^1$ consider
\begin{equation}\label{eq:torsion-tau}
 \tau:=T\,dX-2Z^2\,dY+YZ\,dZ.
\end{equation}
After pulling back to $\CC[u,v]$ we obtain
\[
\begin{aligned}
 \tau
 &=
 v^5(5u^4\,du)
 -2u^2v^4(3u^2v\,du+u^3\,dv)  \\
 &\qquad
 +u^4v^3(v^2\,du+2uv\,dv)
 =0.
\end{aligned}
\]
Since $\CC(u,v)$ is a finite separable extension of $\operatorname{Frac}(R)$,
the kernel of
\[
 \Om_R^1\longrightarrow\Om_{\CC[u,v]}^1
\]
is torsion.  Hence $\tau$ is torsion.

It remains to note that $\tau\neq0$ in $\Om_R^1$.  Give $R$ the
$\mathbb Z^2$-grading
\[
 \deg X=(5,0),\quad
 \deg Y=(3,1),\quad
 \deg Z=(1,2),\quad
 \deg T=(0,5).
\]
Then $\tau$ has bidegree $(5,5)$.  The three defining relations have
bidegrees $(6,2)$, $(3,6)$ and $(5,5)$, respectively.  In bidegree $(5,5)$
the conormal relations are therefore spanned only by
\[
 d(YZ^2-XT)
 =
 Z^2\,dY+2YZ\,dZ-T\,dX-X\,dT,
\]
and \eqref{eq:torsion-tau} is not a scalar multiple of this element.  Thus
$\tau\neq0$.

By faithful flatness of completion, the stalk
$\Tor(\Om_S^1)_{P_3}$ is nonzero.  Since the torsion sheaf is supported at the
single point $P_3$, it follows that
\[
 H^0(S,\Om_S^1)\neq0.
\]
On the other hand, $16>1+1+2+5$ and
\cref{cor:coarse-vanishing} gives
\[
 H^0(S,\Om_S^{[1]})=0.
\]
Thus the nonzero global K\"ahler differentials are purely torsion and
disappear after reflexive hull.
\end{example}

\begin{remark}
For the surface in \cref{ex:torsion},
\[
 K_S=\OO_S(7),\qquad
 p_g(S)=24,\qquad
 q(S)=0,\qquad
 \chi(\OO_S)=25.
\]
The minimal resolution of the $\frac15(1,2)$ singularity has exceptional
chain
\[
 E_1^2=-3,\qquad E_2^2=-2,\qquad E_1E_2=1,
\]
and
\[
 K_{\widetilde S}
 =
 \pi^*K_S-\frac25E_1-\frac15E_2.
\]
Consequently
\[
 K_{\widetilde S}^2=78,
 \qquad
 c_2(\widetilde S)=222.
\]
\end{remark}

\section{The quotient-singularity bigness criterion}
\label{sec:bigness}

Let $Y$ be a normal projective surface with quotient singularities and let
$\sigma:\widetilde Y\to Y$ be the minimal resolution.  For
$y\in\Sing(Y)$, choose a sufficiently small neighborhood germ $U_y$ and let
$\widetilde U_y\to U_y$ be its minimal resolution.  Following
Asega--De Oliveira--Weiss, define
\[
 h_\Omega^1(y)
 :=
 \liminf_{m\to\infty}
 \frac{
 h^0\!\left(
 U_y,
 R^1\sigma_*\Sym^m\Om_{\widetilde U_y}^1
 \right)
 }{m^3}.
\]

\begin{theorem}[QS-bigness criterion {\cite[Theorem 1.1]{ADW1}}]
\label{thm:QS}
Assume that $\widetilde Y$ is of general type.  If
\[
 \sum_{y\in\Sing(Y)}h_\Omega^1(y)
 >
 \frac{c_2(\widetilde Y)-c_1^2(\widetilde Y)}6,
\]
then $\Om_{\widetilde Y}^1$ is big.
\end{theorem}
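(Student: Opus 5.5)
The plan is to bound $h^0(\widetilde Y,\Sym^m\Om^1_{\widetilde Y})$ from below by a cubic in $m$ with positive leading coefficient. Recall that on the surface $\widetilde Y$, bigness of $\Om^1_{\widetilde Y}$ means that the tautological bundle $\OO(1)$ on $\PP(\Om^1_{\widetilde Y})$ is big. Since that projectivization is a threefold, this is equivalent to
\[
 h^0\!\left(\widetilde Y,\Sym^m\Om^1_{\widetilde Y}\right)\ge c\,m^3
\]
for some $c>0$ and all $m\gg0$. Write $F_m:=\Sym^m\Om^1_{\widetilde Y}$. The argument combines three ingredients, computed in the following order.

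\emph{Step 1 (global Riemann--Roch).} Riemann--Roch for symmetric powers of a rank-two bundle on a surface gives
\[
 \chi(\widetilde Y,F_m)=\frac{c_1^2(\widetilde Y)-c_2(\widetilde Y)}{6}\,m^3+O(m^2).
\]
Thus $h^0(F_m)\ge \chi(F_m)+h^1(F_m)-h^2(F_m)$. When $c_2>c_1^2$ the leading coefficient of $\chi$ is negative, and the surplus must come from $h^1$.

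\emph{Step 2 (local contribution via Leray).} For the resolution $\sigma$, the Leray spectral sequence gives the five-term sequence
\[
 0\to H^1(Y,\sigma_*F_m)\to H^1(\widetilde Y,F_m)\to H^0(Y,R^1\sigma_*F_m)\xrightarrow{\ d_2\ } H^2(Y,\sigma_*F_m).
\]
Moreover, $E_\infty^{2,0}=H^2(Y,\sigma_*F_m)/\operatorname{im} d_2$ injects into $H^2(\widetilde Y,F_m)$. Since $R^1\sigma_*F_m$ is supported on $\Sing(Y)$, we have $H^0(Y,R^1\sigma_*F_m)=\bigoplus_y H^0(U_y,R^1\sigma_*F_m)$. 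The rank of $d_2$ is at most $h^2(Y,\sigma_*F_m)$, and this quantity is bounded by
\[
 \dim\operatorname{im} d_2+h^2(\widetilde Y,F_m)\le h^2(\widetilde Y,F_m)+\dim\operatorname{im} d_2 .
\]
To avoid circularity, I would instead use directly that $H^1(\widetilde Y,F_m)$ surjects onto $\ker d_2$, so that
\[
 h^1(F_m)\ \ge\ \sum_y h^0\!\left(U_y,R^1\sigma_*F_m\right)-h^2(Y,\sigma_*F_m).
\]
Separately, I would show $h^2(Y,\sigma_*F_m)=o(m^3)$. The reason is that $\sigma_*F_m$ and $F_m$ agree off finitely many points, and $H^2$ on $Y$ is controlled by Serre duality against sections of $\Hom(\sigma_*F_m,\omega_Y)$. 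The latter embed into $H^0(\widetilde Y,\Sym^mT_{\widetilde Y}\otimes K_{\widetilde Y}\otimes\OO(E))$ for an exceptional divisor $E$ independent of $m$.

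\emph{Step 3 (the $H^2$ term).} This is where the general-type hypothesis enters, and I expect it to be the main obstacle. By Serre duality, $h^2(\widetilde Y,F_m)=h^0(\Sym^mT_{\widetilde Y}\otimes K_{\widetilde Y})$. I would pass to the minimal model $Y_{\min}$: sections of $\Sym^mT\otimes K$, twisted by a fixed exceptional correction, compare birationally. On $Y_{\min}$, Bogomolov's theorem says that $\Om^1$ is $K$-semistable when $K$ is nef and big. Hence $\Sym^mT\otimes K\otimes\OO(E)$ has negative slope with respect to $K$ for $m\ge3$, which yields vanishing, or at least an $O(m^2)$ bound uniform in the fixed twist. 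This gives $h^2(F_m)=O(m^2)$.

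Combining the three steps gives
\[
 h^0(F_m)\ \ge\ \Bigl(\sum_y h^1_\Omega(y)-\tfrac{c_2-c_1^2}{6}\Bigr)m^3-o(m^3).
\]
By the liminf in the definition of $h^1_\Omega(y)$, the coefficient is eventually bounded below by a positive constant under the hypothesis. Hence $\Om^1_{\widetilde Y}$ is big.
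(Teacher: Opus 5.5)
The paper does not prove this criterion; it quotes it from Asega--De Oliveira--Weiss. Your outline (Riemann--Roch, Leray for $\sigma$, a Bogomolov-type vanishing) is the right overall strategy, but there is a genuine gap exactly where the singularities enter: the estimate $h^2(Y,\sigma_*F_m)=o(m^3)$.

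This is in fact the only estimate needed. Since $R^1\sigma_*F_m$ has finite support and $R^2\sigma_*F_m=0$, Leray gives $H^2(\widetilde Y,F_m)=\operatorname{coker}d_2$, and the ranks of $d_2$ cancel, so
\[
 h^0(\widetilde Y,F_m)=\chi(\widetilde Y,F_m)+h^1(Y,\sigma_*F_m)+\sum_{y}h^0\bigl(U_y,R^1\sigma_*F_m\bigr)-h^2(Y,\sigma_*F_m).
\]
In particular your Step~3 is subsumed, because $h^2(\widetilde Y,F_m)\le h^2(Y,\sigma_*F_m)$.

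Your justification for this term is that $\operatorname{Hom}(\sigma_*F_m,\omega_Y)=H^0(Y_{\rm reg},\Sym^mT\otimes K)$ embeds into $H^0(\widetilde Y,\Sym^mT_{\widetilde Y}\otimes K_{\widetilde Y}\otimes\OO(E))$ with $E$ independent of $m$. That is false. Already for a node, take the chart $s=x^2$, $t=y/x$ of the resolution of $\CC^2/\pm1$. There the invariant section $\partial_y^{m}\otimes dx\wedge dy$ ($m$ even) equals $\tfrac12 s^{-m/2}\,\partial_t^m\otimes ds\wedge dt$, so it has a pole of order $m/2$ along the exceptional curve; for $\frac1n(1,1)$ the order is $(m+n-2)/n$. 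These linearly growing poles are the same phenomenon that makes the local invariants cubic, so no fixed twist can absorb them.

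With the honest twist $\OO(\lceil cm\rceil E)$ your Step~3 argument collapses. Measured against $\pi^*K_{Y_{\mathrm{min}}}$, the twist adds $cm\,(E\cdot\pi^*K_{Y_{\mathrm{min}}})$ to the slope. This is of the same order as $(1-\tfrac m2)K_{Y_{\mathrm{min}}}^2$ and can dominate it when there are many exceptional curves with $K\cdot E_i>0$, e.g.\ $(-3)$-curves of $\frac13(1,1)$ points. That is precisely the many-singularity regime the criterion is designed for.

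The missing ingredient is a vanishing theorem on the singular surface itself. By Serre duality on $Y$, $H^2(Y,\sigma_*F_m)^\vee\simeq H^0\bigl(Y,(\Sym^mT_Y\otimes\omega_Y)^{\vee\vee}\bigr)$, i.e.\ orbifold sections of $\Sym^mT\otimes K$. Since $K_Y$ is big, running the $K_Y$-MMP and passing to the canonical model gives a birational morphism $Y\to Y_{\rm can}$ onto a surface with quotient singularities and $K_{Y_{\rm can}}$ ample. Sections over $Y_{\rm reg}$ inject into reflexive sections on $Y_{\rm can}$. The orbifold K\"ahler--Einstein metric (equivalently, orbifold semistability of $T_{Y_{\rm can}}$ with respect to $K_{Y_{\rm can}}$) makes $\Sym^mT\otimes K$ Hermitian--Einstein with negative constant for $m\ge3$, hence without sections.

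Equivalently, on $\widetilde Y$ you must measure slopes against the pullback of $K_{Y_{\rm can}}$. That class is orthogonal to every $\sigma$-exceptional curve, so twists supported on $E$ of any size do not change slopes. Once $h^2(Y,\sigma_*F_m)=0$ for $m\ge3$, the displayed identity and your liminf step finish the proof.
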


For $A_n$ singularities, the following formula
is proved in \cite{ADW1,ADW2}:
\begin{equation}\label{eq:A1A5}
 h_\Omega^1(A_1)=\frac4{27},
 \qquad
 h_\Omega^1(A_5)=\frac{106819}{132300}.
\end{equation}

\begin{corollary}\label{cor:nodes}
Suppose $Y$ has exactly $\ell$ singular points, all nodes, and no other
singularities.  If $\widetilde Y$ is of general type and
\[
 \ell>\frac98\bigl(c_2(\widetilde Y)-c_1^2(\widetilde Y)\bigr),
\]
then $\Om_{\widetilde Y}^1$ is big.
\end{corollary}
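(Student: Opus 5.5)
This corollary should follow directly from \cref{thm:QS} once the $A_1$ value from \eqref{eq:A1A5} is substituted. Every singular point of $Y$ is a node, that is, an $A_1$ singularity. The invariant $h^1_\Omega(y)$ is defined by the local germ $U_y$ and its minimal resolution, so it depends only on the analytic type of the singularity. Hence $h^1_\Omega(y)=\tfrac{4}{27}$ for each of the $\ell$ singular points, and
\[
 \sum_{y\in\Sing(Y)}h^1_\Omega(y)=\frac{4\ell}{27}.
\]

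Next I rewrite the hypothesis. Multiplying the assumed inequality
\[
 \ell>\frac98\bigl(c_2(\widetilde Y)-c_1^2(\widetilde Y)\bigr)
\]
by the positive constant $\tfrac{4}{27}$ gives
\[
 \frac{4\ell}{27}>\frac{4}{27}\cdot\frac98\bigl(c_2-c_1^2\bigr)=\frac{1}{6}\bigl(c_2(\widetilde Y)-c_1^2(\widetilde Y)\bigr),
\]
because $\tfrac{4}{27}\cdot\tfrac{9}{8}=\tfrac{36}{216}=\tfrac16$. This is exactly the numerical inequality required in \cref{thm:QS}. Since no sign of $c_2-c_1^2$ is needed, the equivalence holds whatever that sign is.

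Finally, $\widetilde Y$ is of general type by hypothesis, so \cref{thm:QS} applies and shows that $\Om^1_{\widetilde Y}$ is big. There is no real obstacle. The only point to check is that the $A_1$ value in \eqref{eq:A1A5} is the one for ordinary nodes, and that the local invariant needs nothing beyond the analytic type of the singularity.
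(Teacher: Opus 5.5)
Your proposal is correct and is the paper's own argument: the paper's proof is just "use $h^1_\Omega(A_1)=4/27$ in \cref{thm:QS}". Your explicit check that $\frac{4}{27}\cdot\frac98=\frac16$ supplies the one arithmetic step the paper leaves implicit.
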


\begin{proof}
Use $h_\Omega^1(A_1)=4/27$ in \cref{thm:QS}.
\end{proof}

\section{Kummer pullbacks}
\label{sec:Kummer}

\subsection{The Kummer map}

Let
$
 P=\PP(a_0,a_1,a_2,a_3)
$
with pairwise coprime weights and not all $a_i=1$. Put
\[
 L=\operatorname{lcm}(a_0,a_1,a_2,a_3)
 =a_0a_1a_2a_3.
\]
Consider
\begin{equation}\label{eq:Kummer}
 \Phi:P\longrightarrow\PP^3,
 \qquad
 [x_0:x_1:x_2:x_3]
 \longmapsto
 [x_0^{L/a_0}:x_1^{L/a_1}:x_2^{L/a_2}:x_3^{L/a_3}].
\end{equation}
Every coordinate on the right has weighted degree $L$.  If $H$ denotes the
weighted hyperplane class, then
\[
 H^3=\frac1L,
 \qquad
 \Phi^*\OO_{\PP^3}(1)=\OO_P(L),
\]
and hence
\begin{equation}\label{eq:degreePhi}
 \deg\Phi=(LH)^3=L^2.
\end{equation}

After a general projective transformation of a fixed surface
$X_0\subset\PP^3$, we may and do assume that all singular points of $X_0$
avoid the coordinate hyperplanes and that the smooth locus of $X_0$ meets
the coordinate strata transversely.  Under these assumptions the pullback
has no new singularities along the branch strata, while every singular point
of $X_0$ has $L^2$ \'etale preimages of the same analytic type.

For later use set
\[
 W:=\sum_{i=0}^3a_i,
 \qquad
 E_2:=\sum_{0\le i<j\le3}a_ia_j.
\]

\begin{proposition}\label{prop:Kummer-criterion}
Let $X_0\subset\PP^3$ be a degree-$t$ surface, $t\ge5$, with only
Du Val singularities $x_1,\dots,x_\ell$.  Assume that, after a general
projective transformation, the transversality conditions above hold.
Let $Y=\Phi^{-1}(X_0)$ and $\widetilde Y\to Y$ be the minimal resolution.
If
\begin{equation}\label{eq:Kummer-hyp}
 \sum_{i=1}^{\ell}h_\Omega^1(x_i)
 >
 \frac{t}{6}\left(\frac52t-4\right),
\end{equation}
then $\Om_{\widetilde Y}^1$ is big.
\end{proposition}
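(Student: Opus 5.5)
The plan is to compute the Chern numbers of $\widetilde Y$ explicitly and reduce \eqref{eq:Kummer-hyp} to the hypothesis of \cref{thm:QS} applied to $Y$.

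\textbf{Step 1: the geometry of $Y$.} Since the weights are pairwise coprime, the singular points of $P$ are among the coordinate vertices. By the transversality assumptions, $X_0$ avoids the images of these vertices, so $Y=\Phi^{-1}(X_0)$ is a hypersurface of weighted degree $d:=tL$ contained in $P_{\rm reg}$. The singularities of $Y$ are exactly the $L^2\ell$ étale preimages of the $x_i$, each of the same analytic type. Hence
\[
\sum_{y\in\Sing(Y)}h^1_\Omega(y)=L^2\sum_{i=1}^\ell h^1_\Omega(x_i).
\]
By weighted adjunction $K_Y=\OO_Y(tL-W)$. Since the singularities are Du Val, the minimal resolution is crepant. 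The inequality $tL\ge 5L>W$ holds because $L\ge2$ and $W\le 2L$ for pairwise coprime weights that are not all $1$ (I would check this small inequality separately). It makes $K_Y$ ample, so $\widetilde Y$ is minimal of general type, as required in \cref{thm:QS}.

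\textbf{Step 2: Chern numbers.} Using $H^2\cdot Y=d/L=t$, we get $c_1^2(\widetilde Y)=K_Y^2=t(tL-W)^2$. For $c_2$ I compare with a smooth member $S_d$ of the same linear system, which has $c(T_{S_d})=\prod(1+a_iH)/(1+dH)$. This gives
\[
c_2(S_d)=(E_2-dW+d^2)\,t .
\]
Du Val singularities have Milnor number equal to the number of exceptional curves of the minimal resolution, so $e(\widetilde Y)=e(S_d)$, i.e. $c_2(\widetilde Y)=c_2(S_d)$. Therefore
\[
c_2(\widetilde Y)-c_1^2(\widetilde Y)=t\bigl(E_2+tLW-W^2\bigr).
\]
Dividing by $L^2$, the criterion of \cref{thm:QS} becomes
\[
\sum_i h^1_\Omega(x_i)>\frac t6\Bigl(t\frac WL-\frac{W^2-E_2}{L^2}\Bigr).
\]

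\textbf{Step 3: the numerical comparison, which I expect to be the main obstacle.} It remains to show that
\[
g(a):=t\frac WL-\frac{W^2-E_2}{L^2}\le \frac52t-4
\]
for all admissible weights and all $t\ge5$. For $(1,1,1,2)$ there is equality: $W/L=5/2$ and $W^2-E_2=25-9=16=4L^2$.

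First, I would prove $W/L\le 5/2$ with equality only for $(1,1,1,2)$. This follows from pairwise coprimality: if at least two weights exceed $1$, then $L$ grows multiplicatively while $W$ grows additively.

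Second, for the other weight vectors I would bound the deficit. The loss $(5/2-W/L)t\ge 5(5/2-W/L)$ should exceed $4-(W^2-E_2)/L^2\le4$. I would check this by separating the cases of one nontrivial weight $r\ge3$, where $W/L=(3+r)/r$, and of two or more nontrivial weights, using $t\ge5$ and the elementary estimate above. Given this inequality, \eqref{eq:Kummer-hyp} implies the hypothesis of \cref{thm:QS}, and hence $\Om_{\widetilde Y}^1$ is big.
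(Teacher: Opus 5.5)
Your proof is correct and has the same skeleton as the paper's: crepant resolution, Chern numbers of a smooth member of $|\OO_P(tL)|$, then \cref{thm:QS} after dividing by $L^2$. Two sub-steps are argued differently, and you add one verification the paper omits.

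\emph{Computing $c_2$.} You use $e(\widetilde Y)=e(Y)+\sum n_p$, $e(S_d)=e(Y)+\sum\mu_p$ and $\mu_p=n_p$ at ADE points. The paper instead cites Brieskorn simultaneous resolution. Your route is more elementary. The paper's route also gives the deformation equivalence it reuses in \cref{sec:jumping}.

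\emph{The numerical bound.} The paper writes $g=f(u)$ with $u_i=a_i/L$. It observes that $u$ lies in the box $(0,\tfrac12]^3\times(0,1]$. There $\partial f/\partial u_i=t-\sum_j u_j-u_i\ge t-\tfrac72>0$, so $f$ is maximised at the corner $(\tfrac12,\tfrac12,\tfrac12,1)$, which is exactly $(1,1,1,2)$. This needs no case split. Your case split also works. With two or more nontrivial weights, $u_i=1/\prod_{j\ne i}a_j$ gives $W/L\le\tfrac16+\tfrac16+\tfrac13+\tfrac12=\tfrac76$. Then $5(\tfrac52-\tfrac76)=\tfrac{20}3>4$.

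\emph{General type.} You check explicitly that $Y\subset P_{\rm reg}$ and that $K_Y$ is ample. Hence $\widetilde Y$ is of general type, which is a hypothesis of \cref{thm:QS} that the paper's proof leaves implicit.

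Two small corrections are needed.
\begin{enumerate}
\item In Step 1, the inequality $W\le 2L$ is false for $(1,1,1,2)$, where $W=5$ and $L=2$. Use $W\le 4L$ instead, which holds because each $a_i\le L$. This still gives $tL-W\ge L>0$.
\item In Step 3, with a single nontrivial weight $r$, the crude bound $4-(W^2-E_2)/L^2\le4$ is not enough for $r=3,4$: it would require $W/L\le\tfrac{17}{10}$. You must keep the exact value $(W^2-E_2)/L^2=1+\tfrac3r+\tfrac6{r^2}$. Then $(\tfrac52t-4)-g=t(\tfrac32-\tfrac3r)-3+\tfrac3r+\tfrac6{r^2}$. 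This is nondecreasing in $t$ and equals $\tfrac{3(3r-2)(r-2)}{2r^2}\ge0$ at $t=5$, so it is nonnegative for all $t\ge5$.
\end{enumerate}
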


\begin{proof}
The pullback $Y$ has weighted degree $tL$ and $L^2$ preimages of each
singularity.  Since the singularities are Du Val, the minimal resolution is
crepant and, after simultaneous resolution of a smoothing, has the same
Chern numbers as a smooth weighted hypersurface of degree $tL$.  Hence
\begin{align}
 c_1^2(\widetilde Y)
 &=t(tL-W)^2,\\
 c_2(\widetilde Y)
 &=t\bigl(t^2L^2-tLW+E_2\bigr),
\end{align}
and therefore
\begin{equation}\label{eq:chern-difference}
 c_2(\widetilde Y)-c_1^2(\widetilde Y)
 =
 t\bigl(tLW+E_2-W^2\bigr).
\end{equation}

Set $u_i=a_i/L$.  For a genuinely weighted space we may order the weights so
that
\[
 0<u_0,u_1,u_2\le\frac12,
 \qquad
 0<u_3\le1.
\]
Then
\[
 \frac{tLW+E_2-W^2}{L^2}
 =
 f(u_0,u_1,u_2,u_3),
\]
where
\[
 f(u)
 =
 t\sum_i u_i+\sum_{i<j}u_iu_j-\left(\sum_i u_i\right)^2.
\]
For $t\ge5$,
\[
 \frac{\partial f}{\partial u_i}
 =
 t-\sum_j u_j-u_i>0
\]
on the above box.  Consequently
\[
 f(u)
 \le
 f\!\left(\frac12,\frac12,\frac12,1\right)
 =
 \frac52t-4.
\]
By \eqref{eq:chern-difference},
\[
 \frac{c_2(\widetilde Y)-c_1^2(\widetilde Y)}{6}
 \le
 \frac{tL^2}{6}\left(\frac52t-4\right).
\]
On the other hand the local contribution on $Y$ is
\[
 L^2\sum_{i=1}^{\ell}h_\Omega^1(x_i).
\]
The claim follows from \eqref{eq:Kummer-hyp} and \cref{thm:QS}.
\end{proof}

\subsection{A degree $12$ hypersurface with $60$ singularities of type
$A_5$}

Asega--De Oliveira--Weiss consider the degree-$d$ cyclic cover of $\PP^2$
branched along $d$ lines in general position \cite[Proof of
Theorem~1.5]{ADW1}.  For $d=6$, choose six lines
$L_1,\ldots,L_6\subset\PP^2$ in general position and write the cover as
\begin{equation}\label{eq:cyclic-sextic}
 X_6=
 \left\{
 w^6=L_1(x_0,x_1,x_2)\cdots L_6(x_0,x_1,x_2)
 \right\}
 \subset\PP^3.
\end{equation}
Since no three of the lines are concurrent, the singular points of $X_6$
lie precisely over the $\binom62=15$ pairwise intersections of the branch
lines.  Near such a point the equation is analytically equivalent to
\[
 uv-w^6=0,
\]
so $X_6$ has exactly $15$ singularities of type $A_5$ and no others.

After a general projective transformation, assume that these singular points
avoid the coordinate hyperplanes, that $[0:0:0:1]\notin X_6$, and that the
smooth locus meets all coordinate strata transversely.  Consider
\[
 \Phi_2:\PP(1,1,1,2)\longrightarrow\PP^3,
 \qquad
 [x_0:x_1:x_2:w]
 \longmapsto
 [x_0^2:x_1^2:x_2^2:w].
\]
Its degree is $4$.  Put
\[
 Y_{12}:=\Phi_2^{-1}(X_6).
\]
Then $Y_{12}$ has weighted degree $12$, avoids the unique singular point of
$\PP(1,1,1,2)$, and has exactly $60$ singularities of type $A_5$ and no
other singularities.

\begin{theorem}\label{thm:A5example}
Let $\widetilde Y_{12}\to Y_{12}$ be the minimal resolution.  Then
$\Om_{\widetilde Y_{12}}^1$ is big.
\end{theorem}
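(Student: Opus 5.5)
The plan is to apply the QS-bigness criterion \cref{thm:QS} directly to $Y=Y_{12}$, using the singularity count established just before the statement and the value of $h_\Omega^1(A_5)$ from \eqref{eq:A1A5}. The Chern numbers of $\widetilde Y_{12}$ will come from the fact that all singularities are Du Val. The minimal resolution is then crepant. By Brieskorn's simultaneous resolution applied to a general smoothing of $Y_{12}$ inside $|\OO_{\PP(1,1,1,2)}(12)|$, whose general member is smooth and misses $[0:0:0:1]$, the surface $\widetilde Y_{12}$ is deformation equivalent to a smooth hypersurface $S_{12}\subset\PP(1,1,1,2)_{\rm reg}$. Hence $c_1^2$ and $c_2$ can be computed on $S_{12}$. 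This is the same mechanism already used in the proof of \cref{prop:Kummer-criterion}.

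\textbf{Step 1: general type.} With $W=5$ and $d=12$, weighted adjunction \eqref{eq:adjunction} gives $K_{Y_{12}}=\OO(7)$. This is ample on $Y_{12}$, which avoids the singular point of the ambient space. Since $\sigma^*K_{Y_{12}}=K_{\widetilde Y_{12}}$ by crepancy, $K_{\widetilde Y_{12}}$ is big and nef. So $\widetilde Y_{12}$ is of general type, as \cref{thm:QS} requires.

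\textbf{Step 2: Chern numbers.} Use $H^3=1/2$, $W=5$ and $E_2=\sum_{i<j}a_ia_j=9$. Then
\[
 c_1^2=\tfrac{12\cdot 7^2}{2}=294,\qquad c_2=\tfrac{12(144-60+9)}{2}=558,
\]
so $(c_2-c_1^2)/6=44$.

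\textbf{Step 3: local contributions.} The singular locus consists of $60$ points of type $A_5$, so the total local contribution is
\[
 60\cdot\tfrac{106819}{132300}=\tfrac{106819}{2205}\approx 48.4>44 .
\]
By \cref{thm:QS}, $\Om^1_{\widetilde Y_{12}}$ is big.

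Alternatively, one can invoke \cref{prop:Kummer-criterion} with $t=6$ and $L=2$. Its threshold is $\tfrac{6}{6}\bigl(15-4\bigr)=11$, and $X_6$ has $15\cdot\tfrac{106819}{132300}\approx 12.1>11$. This gives the same conclusion, with the transversality hypotheses supplied by the general coordinate change.

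The main obstacle is not the numerics but justifying the geometric input. First, $Y_{12}=\Phi_2^{-1}(X_6)$ must have exactly $60$ $A_5$ points and be otherwise smooth, including along the branch locus $\{x_0x_1x_2=0\}$ of $\Phi_2$; this is where the genericity of the coordinate change is used. Second, the Du Val smoothing must stay inside the weighted linear system, so that simultaneous resolution legitimately transfers the Chern numbers of a smooth degree-$12$ hypersurface to $\widetilde Y_{12}$. I would handle the first point by checking that $\Phi_2$ is étale over $X_6$ away from the coordinate hyperplanes, which contain no singular point, and that the transverse intersections with those hyperplanes pull back to smooth curves. The second point follows as in \cref{prop:Kummer-criterion}, since Chern numbers are deformation invariant.
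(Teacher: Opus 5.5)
Your proof is correct and is essentially the paper's. The paper checks $15\,h_\Omega^1(A_5)=106819/8820>11$ and invokes \cref{prop:Kummer-criterion}, which is exactly your ``alternative''. Your main route is that proposition's proof specialized to $\PP(1,1,1,2)$ with $t=6$, where $u=(\tfrac12,\tfrac12,\tfrac12,1)$ is the extremal corner, so your exact threshold $(c_2-c_1^2)/6=44$ equals the proposition's bound $L^2\cdot 11$. Your Step~1, deriving general type from the ampleness of $K_{Y_{12}}=\OO_{Y_{12}}(7)$ and crepancy, supplies a hypothesis of \cref{thm:QS} that the paper's proof of \cref{prop:Kummer-criterion} never verifies.
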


\begin{proof}
By \eqref{eq:A1A5},
\[
 15\,h_\Omega^1(A_5)
 =15\cdot\frac{106819}{132300}
 =\frac{106819}{8820}
 >11
 =\frac{6}{6}\left(\frac52\cdot6-4\right).
\]
Thus $X_6$ satisfies the numerical hypothesis of
\cref{prop:Kummer-criterion}.  Applying that proposition to $\Phi_2$ gives
the claim.
\end{proof}

\subsection{A nodal Kummer example from the Barth decic}

For completeness we record another family.  Barth constructed a degree-ten
surface $B_{10}\subset\PP^3$ with $345$ nodes \cite{Barth}.  After a general
projective transformation, define
\[
 X_r=\Phi_r^{-1}(B_{10})
 \subset\PP(1,1,1,r),
 \qquad
 \Phi_r[x_0:x_1:x_2:w]
 =
 [x_0^r:x_1^r:x_2^r:w].
\]
Then $X_r$ has degree $10r$ and $345r^2$ nodes, and no other
singularities.  If $Y_r\to X_r$ is the minimal resolution, then
\[
 K_{Y_r}^2=10(9r-3)^2,
 \qquad
 c_2(Y_r)=900r^2-270r+30.
\]
Thus
\[
 c_2(Y_r)-K_{Y_r}^2=90r^2+270r-60.
\]
For every $r\ge2$,
\[
 345r^2>
 \frac98\bigl(90r^2+270r-60\bigr),
\]
so \cref{cor:nodes} gives:

\begin{corollary}\label{cor:Barth}
For every $r\ge2$, the minimal resolution of $X_r$ has big cotangent
bundle.
\end{corollary}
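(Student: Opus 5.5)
The plan is to apply \cref{cor:nodes} directly to the minimal resolution $Y_r$ of $X_r=\Phi_r^{-1}(B_{10})\subset\PP(1,1,1,r)$. This requires three inputs: the singularities of $X_r$ are exactly $345r^2$ nodes; $Y_r$ is of general type; and the numerical inequality holds.

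\emph{Singularities.} The map $\Phi_r$ is a Kummer-type cover of degree $(r H)^3\cdot r=r^2$, since $\Phi_r^*\OO_{\PP^3}(1)=\OO(r)$ and $H^3=1/r$. After a general projective transformation we may assume the following:
\begin{itemize}
\item the $345$ nodes of $B_{10}$ avoid the hyperplanes $x_0x_1x_2=0$;
\item $[0:0:0:1]\notin B_{10}$;
\item the smooth locus of $B_{10}$ meets the branch strata $\{x_i=0\}$ and their intersections transversely.
\end{itemize}
Over the open set $x_0x_1x_2\neq0$ the map $\Phi_r$ is \'etale. Hence each node has $r^2$ preimages of type $A_1$. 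Along the branch locus, transversality plus a local Jacobian computation (in the form $x_0^r$ substituted into a function with nonzero $\partial/\partial y_0$) shows $X_r$ is smooth there. $X_r$ also misses the singular point of $\PP(1,1,1,r)$. So $X_r\subset P_{\rm reg}$ has exactly $345r^2$ nodes.

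\emph{Chern numbers.} The surface $X_r$ has degree $d=10r$ with $W=r+3$, so $K=\OO(9r-3)$ by adjunction. Since nodes are crepant, $K_{Y_r}^2=(9r-3)^2\cdot 10r\cdot H^3=10(9r-3)^2$. For $c_2$ I would compute via the simultaneous-resolution argument used in \cref{prop:Kummer-criterion}: $c_2(Y_r)$ equals $c_2$ of a smooth degree-$10r$ hypersurface. Setting $t=10$, $L=r$, $E_2=3+3r$ in the formula there gives
\[
c_2=10(100r^2-10r(r+3)+3+3r)=900r^2-270r+30.
\]
This confirms $c_2-K^2=90r^2+270r-60$. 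General type follows because $K_{Y_r}$ is the pullback of the ample class $\OO(9r-3)$ on $X_r$, with $9r-3>0$: it is nef and big.

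\emph{Numerical inequality.} We need $345r^2>\tfrac98(90r^2+270r-60)$, i.e.\ $2760r^2>810r^2+2430r-540$, i.e.\ $1950r^2-2430r+540>0$. The roots of this quadratic are $r=0.9/... $ and $r<1.3$, so it holds for all $r\ge2$ (at $r=2$ the value is $3480>0$). Then \cref{cor:nodes} concludes.

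The main obstacle is the singularity analysis along the branch strata, in particular where several coordinates vanish simultaneously and the Kummer map has larger ramification. I would handle it by checking the partial derivatives in the local orbifold charts, using the general-position assumptions.
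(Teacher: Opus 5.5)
Your proposal is correct and is essentially the paper's argument: the same count of $345r^2$ nodes via the degree-$r^2$ Kummer cover, the same Chern numbers obtained by comparing with a smooth degree-$10r$ hypersurface, and the same inequality fed into \cref{cor:nodes}; your explicit general-type check supplies a hypothesis of \cref{cor:nodes} that the paper leaves implicit. There are only cosmetic slips, none of which affects the argument: $\deg\Phi_r=(rH)^3=r^2$ with no extra factor $r$; the larger root of $1950r^2-2430r+540$ is about $0.96$; and on the branch locus the derivative that must be nonzero is one tangent to $\{y_0=0\}$ (this is what transversality gives), because $\partial_{x_0}$ of $f(x_0^r,\dots)$ vanishes identically on $x_0=0$.
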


\section{A weighted Segre construction}
\label{sec:Segre}

The construction below is a weighted analogue of Segre's classical
construction of nodal surfaces \cite{Segre}.

Fix $r\ge2$ and put
\[
 P_r=\PP(1,1,1,r).
\]
Let $q\ge1$ and set
\[
 d=2qr.
\]
Choose $2qr$ general lines
\[
 L_1,\dots,L_{2qr}\subset\PP^2=\{w=0\}\subset P_r
\]
and choose a general polynomial
\[
 G\in H^0(P_r,\OO_{P_r}(qr))
\]
such that $G$ does not vanish at the singular point
$[0:0:0:1]\in P_r$.  For a general nonzero scalar $\lambda$, define
\begin{equation}\label{eq:Segre}
 X_d=
 \left\{
 G^2+\lambda\prod_{i=1}^{2qr}L_i=0
 \right\}
 \subset P_r.
\end{equation}
The hypersurface avoids the ambient singular point. For general choices, Bertini's theorem shows that it is smooth away from the base locus of the pencil generated by $G^2$ and $\prod_i L_i$. Along the base locus, a singular point must satisfy $G=0$ and lie on at least two of the lines. Since the lines are general, no three are concurrent. Near a point with $G=L_i=L_j=0$, the functions $G,L_i,L_j$ may be taken as local coordinates up to units, and the equation is analytically of the form
\[
 g^2+\lambda uv\cdot(\text{unit})=0,
\]
which is an ordinary double point. Thus the singular locus consists precisely of the points at which $G=0$ and two of the lines $L_i$ meet, and all these singularities are nodes.

For every pair $L_i,L_j$, the complete intersection $L_i=L_j=0$ is
isomorphic to $\PP(1,r)$, and
\[
 \deg\bigl(\OO_{\PP(1,r)}(qr)\bigr)=q.
\]
Thus each pair contributes $q$ nodes and the total number of nodes is
\begin{equation}\label{eq:numbernodes}
 \ell
 =
 q\binom{2qr}{2}
 =
 q^2r(2qr-1).
\end{equation}

Let $\widetilde X_d\to X_d$ be the minimal resolution.  Since the
singularities are nodes, it is crepant and is deformation equivalent to a
smooth hypersurface of degree $d$.  The corresponding Chern-number formulas are
\begin{align}
 K_{\widetilde X_d}^2
 &=
 \frac{d(d-r-3)^2}{r}
 =
 2q(2qr-r-3)^2,
 \label{eq:SegreK2}\\
 c_2(\widetilde X_d)
 &=
 \frac{d\bigl(d^2-d(r+3)+3r+3\bigr)}{r}\notag\\
 &=
 2q\bigl(
 4q^2r^2-2qr(r+3)+3r+3
 \bigr).
 \label{eq:Segrec2}
\end{align}

\begin{proposition}\label{prop:Segre}
If
\begin{equation}\label{eq:qbound}
 q>
 \frac{9r+29+\sqrt{9r^2+306r+409}}{8r},
\end{equation}
then $\Om_{\widetilde X_d}^1$ is big.
\end{proposition}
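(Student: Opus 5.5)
The plan is to deduce the statement from \cref{cor:nodes}, using the node count \eqref{eq:numbernodes} and the Chern numbers \eqref{eq:SegreK2}, \eqref{eq:Segrec2}. Two things must be verified: that $\widetilde X_d$ is of general type, and the numerical inequality $\ell>\frac98\bigl(c_2(\widetilde X_d)-K_{\widetilde X_d}^2\bigr)$. The claim is that \eqref{eq:qbound} is exactly the condition that this inequality holds.

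\emph{Chern difference.} Write $K_{\widetilde X_d}^2=2q\bigl(2qr-(r+3)\bigr)^2$ and expand the square. Subtracting from \eqref{eq:Segrec2}, the $4q^2r^2$ terms cancel, and I expect
\[
 c_2(\widetilde X_d)-K_{\widetilde X_d}^2
 =2q\bigl(2qr(r+3)-r^2-3r-6\bigr).
\]

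\emph{Reduction to a quadratic in $q$.} Insert this and $\ell=q^2r(2qr-1)$ into the inequality of \cref{cor:nodes}. Divide by $q>0$, multiply by $4$, and divide by $r$. The inequality becomes
\[
 8r\,q^2-(18r+58)\,q+\frac{9(r^2+3r+6)}{r}>0 .
\]
The discriminant of this quadratic is
\[
 (18r+58)^2-288(r^2+3r+6)=4\,(9r^2+306r+409)>0.
\]
So its larger root is exactly the right-hand side of \eqref{eq:qbound}. Since the leading coefficient is positive, every $q$ satisfying \eqref{eq:qbound} lies beyond the larger root and satisfies the node inequality.

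\emph{General type.} I check that the hypothesis of \cref{thm:QS} holds. The bound \eqref{eq:qbound} gives $q>(9r+29+3r)/(8r)>3/2$, so $q\ge2$. Hence $d-r-3=2qr-r-3\ge3r-3>0$ for $r\ge2$.

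By construction, $X_d$ avoids the ambient singular point and has only nodes. So weighted adjunction \eqref{eq:adjunction} gives $K_{X_d}=\OO_{X_d}(d-r-3)$, which is ample. The resolution is crepant, so $K_{\widetilde X_d}$ is the pullback of an ample class. It is therefore nef and big with $K^2>0$, and $\widetilde X_d$ is minimal of general type. \cref{cor:nodes} then gives bigness of $\Om^1_{\widetilde X_d}$.

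The only delicate step is the algebra of the discriminant, which must match the stated bound exactly. Everything else is a direct invocation of earlier results.
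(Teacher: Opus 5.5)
Your proposal is correct and follows the paper's proof essentially step for step: reduce to \cref{cor:nodes}, turn $8\ell-9(c_2-K^2)>0$ into a quadratic in $q$ (you divided by $r$, which leaves the roots unchanged), identify \eqref{eq:qbound} as its larger root, and get general type from ampleness of $K_{X_d}=\OO_{X_d}(2qr-r-3)$. Your explicit discriminant computation and your derivation of $q\ge2$, hence $2qr>r+3$, fill in details the paper only asserts.
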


\begin{proof}
By \cref{cor:nodes}, it is enough to prove
\[
 \ell>\frac98
 \left(c_2(\widetilde X_d)-K_{\widetilde X_d}^2\right).
\]
Using \eqref{eq:numbernodes}, \eqref{eq:SegreK2} and
\eqref{eq:Segrec2}, this is equivalent to
\[
 8\ell-9(c_2-K^2)>0.
\]
A direct simplification gives
\begin{equation}\label{eq:Segrepoly}
 8\ell-9(c_2-K^2)
 =
 2q\left[
 8r^2q^2-(18r^2+58r)q+9r^2+27r+54
 \right].
\end{equation}
The larger root of the quadratic polynomial in brackets is
\[
 \frac{9r+29+\sqrt{9r^2+306r+409}}{8r}.
\]
Hence \eqref{eq:qbound} implies \eqref{eq:Segrepoly} is positive.

Finally,
\[
 K_{X_d}=\OO_{X_d}(2qr-r-3).
\]
For $q$ satisfying \eqref{eq:qbound}, in particular $2qr>r+3$, so
$K_{X_d}$ is ample.  Thus $\widetilde X_d$ is of general type, as required
by the QS-bigness criterion.
\end{proof}

\begin{corollary}
For every $r\ge2$ there exist nodal hypersurfaces in
$\PP(1,1,1,r)$ whose minimal resolutions have big cotangent bundle.
\end{corollary}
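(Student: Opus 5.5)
The corollary follows from \cref{prop:Segre} once we show that, for each fixed $r\ge2$, some integer $q\ge1$ satisfies \eqref{eq:qbound}, and that the construction \eqref{eq:Segre} can then be carried out. The right-hand side of \eqref{eq:qbound} is a finite real number for every $r$, so some integer $q$ exceeds it. To make the statement concrete, I would produce an explicit uniform choice.

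The first step bounds the threshold. For $r\ge2$,
\[
 9r^2+306r+409\le(3r+51)^2=9r^2+306r+2601,
\]
so the larger root is at most
\[
 \frac{9r+29+3r+51}{8r}=\frac{12r+80}{8r}=\frac32+\frac{10}{r}.
\]
For $r\ge2$ this is at most $\frac{13}{2}$. Hence the uniform choice $q=7$ satisfies \eqref{eq:qbound} for every $r\ge2$. For large $r$ one could take $q$ as small as $2$ once $\frac32+\frac{10}{r}<2$, i.e. $r>20$, but this is not needed.

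The second step checks the hypotheses of the construction for this $q$. Set $d=14r$. We need $2qr$ general lines in $\PP^2$, and a general $G\in H^0(P_r,\OO_{P_r}(qr))$ not vanishing at $[0:0:0:1]$. Such a $G$ exists because $qr$ is a multiple of $r$, so the monomial $w^{q}$ has degree $qr$. The paragraph preceding \cref{prop:Segre} then shows that $X_d$ is a nodal hypersurface avoiding the ambient singular point, with exactly $\ell$ nodes as in \eqref{eq:numbernodes}. The proof of \cref{prop:Segre} also verifies general type, since $2qr>r+3$. Applying \cref{prop:Segre} then gives bigness of $\Om^1_{\widetilde X_d}$, which is the corollary.

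The only mildly delicate point is the genericity in the construction: we need the base-locus analysis to produce nodes only, with the claimed count $q$ on each $\PP(1,r)=\{L_i=L_j=0\}$. For this, $G$ restricted to each such curve must have $q$ distinct zeros avoiding the singular point. This holds for general $G$, because $\OO_{\PP(1,r)}(qr)$ is spanned by $w^q$ and monomials in the remaining coordinate, and these sections separate points. Since the paper establishes this before \cref{prop:Segre}, I would simply cite it. The main content is then the elementary inequality above, and I expect no real obstacle.
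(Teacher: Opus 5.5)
Your proposal is correct and follows the paper's route: the corollary is read off from \cref{prop:Segre} by taking any integer $q$ above the finite threshold in \eqref{eq:qbound}. Nodality, the node count and general type all come from the construction and proof given there. Your estimate $\sqrt{9r^2+306r+409}\le 3r+51$, which yields the uniform choice $q=7$ for all $r\ge2$, is a valid concrete version of what the paper leaves implicit.
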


\section{Deformation and jumping of symmetric plurigenera}
\label{sec:jumping}

\begin{proposition}\label{prop:duval-deformation}
Let $X_0$ be a hypersurface contained in the smooth locus of a weighted
projective three-space and suppose that $X_0$ has only Du Val singularities.
Then its minimal resolution is deformation equivalent to a smooth weighted
hypersurface of the same degree.
\end{proposition}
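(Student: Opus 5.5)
We will realise the minimal resolution of \(X_0\) as a fibre of a smooth family whose other fibres are smooth weighted hypersurfaces, using Brieskorn's simultaneous resolution \cite{Brieskorn}. Since \(X_0\subset P_{\rm reg}\), all of the analysis can be done in the smooth threefold \(P_{\rm reg}\), where \(\OO_P(d)\) restricts to a line bundle.

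First we construct the smoothing. Let \(F_0\) be the defining equation of \(X_0\), of weighted degree \(d\), and let \(F_1\) be a general member of \(H^0(P,\OO_P(d))\). The singularities of \(X_0\) are Du Val, so they are isolated. We consider the pencil
\[
 \mathcal X=\{F_0+sF_1=0\}\subset P\times\Delta
\]
over a small disc \(\Delta\). Being contained in \(P_{\rm reg}\) is an open condition, so for \(s\) small the fibre \(X_s\) still avoids the finitely many singular points of \(P\). For general \(F_1\) and general small \(s\ne0\), Bertini on \(P_{\rm reg}\) shows that \(X_s\) is smooth. The Bertini step needs the linear system \(|\OO_P(d)|\) on \(P_{\rm reg}\) to have no base points near \(X_0\), or at least to be base-point free off a locus that \(X_s\) avoids. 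This holds, for example, when a power of each variable appears among degree-\(d\) monomials, which is exactly the quasi-smooth-avoiding-singular-points situation. In general we would instead perturb only locally: the semiuniversal deformations of the Du Val germs are realised by degree-\(d\) sections, because \(H^1(\OO_P(d)\otimes\mathcal I)\) is controlled by the twisted vanishing of Lemma \ref{lem:H1line}, which gives surjectivity onto the local \(T^1\). Then \(\mathcal X\to\Delta\) is a flat family of surfaces in \(P_{\rm reg}\), with central fibre \(X_0\) and smooth nearby fibres of the same degree.

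Next we pass to a simultaneous resolution. The total space \(\mathcal X\) has at each singular point of \(X_0\) a germ which is a one-parameter deformation of an ADE singularity, so it is pulled back from the semiuniversal deformation. Brieskorn's theorem gives a finite base change \(\Delta'\to\Delta\), \(s=s'^N\), and a simultaneous resolution \(\widetilde{\mathcal X}\to\mathcal X\times_\Delta\Delta'\). This is a proper morphism with the following properties:
\begin{itemize}
 \item \(\widetilde{\mathcal X}\to\Delta'\) is smooth;
 \item over \(s'\ne0\) it is an isomorphism onto the smooth fibres \(X_s\);
 \item over \(0\) it is the minimal resolution \(\widetilde X_0\to X_0\).
\end{itemize}
The local resolutions are glued with the identity away from the finitely many singular points, which is legitimate because the singular locus of the family is finite over \(\Delta\).

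By Ehresmann's theorem, the proper smooth family \(\widetilde{\mathcal X}\to\Delta'\) is differentiably trivial, so \(\widetilde X_0\) and \(X_s\) are deformation equivalent. Projectivity of the fibres is automatic: \(\widetilde X_0\) is a resolution of a projective surface, and the \(X_s\) are projective. The step I expect to be the main obstacle is the first one, namely ensuring that a degree-\(d\) perturbation actually smooths every singular point simultaneously. This requires that the restriction map from \(H^0(\OO_P(d))\) to the local deformation spaces be surjective, or at least hit a smoothing direction at each point. I would try first the Bertini argument with a general \(F_1\), and fall back on the cohomological surjectivity if base points intervene.
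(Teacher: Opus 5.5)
Your route is the paper's route. The paper's proof consists of exactly your three steps (a one-parameter smoothing inside $|\OO_P(d)|$, Brieskorn's simultaneous resolution after a finite base change, and the resulting smooth proper family), stated in two sentences. Your remarks on gluing the local small resolutions and on Ehresmann's theorem are correct additions. The one place your write-up does not close is the smoothing step: the paper simply asserts it, and you leave it conditional with a fallback that fails.

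\textbf{The fallback is not valid.} \cref{lem:H1line} is about $H^1(\cS,\OO_{\cS}(k))$ on a \emph{quasi-smooth} hypersurface stack. It says nothing about $H^1$ of $\OO_P(d)$ twisted by the ideal of the zero-dimensional Tjurina scheme of $X_0$. Moreover, surjectivity of $H^0(P,\OO_P(d))\to\bigoplus_x T^1_x$ is false in cases the proposition covers: Barth's decic has $345$ nodes, while $h^0(\OO_{\PP^3}(10))=286$.

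\textbf{The fallback is also never needed}, because $X_0\subset P_{\rm reg}$ already forces base-point freeness.
\begin{enumerate}[label=(\alph*)]
\item Since $P$ is well formed, every vertex $P_i$ with $a_i>1$ is a singular point of $P$. Hence $F_0(P_i)\neq0$, i.e.\ $x_i^{d/a_i}$ occurs in $F_0$.
\item Therefore $\operatorname{lcm}(a_0,\dots,a_3)\mid d$, so $\OO_P(d)$ is invertible and generated by the sections $x_i^{d/a_i}$.
\item Bertini on $P_{\rm reg}$ applies. The condition $X\cap\Sing P=\emptyset$ is open and holds for $X_0$. Together these show that the smooth members of $|\OO_P(d)|$ contained in $P_{\rm reg}$ form a nonempty Zariski-open set $U$.
\item For $F_1\in U$, the pencil $F_0+sF_1$ lies in $U$ for all but finitely many $s$, hence for all small $s\neq0$.
\end{enumerate}
With this argument in place of your case distinction, the rest of your proof goes through as written.
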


\begin{proof}
Choose a one-parameter smoothing of $X_0$ inside the hypersurface linear
system.  After a finite base change, Brieskorn simultaneous resolution
\cite{Brieskorn} gives a smooth family whose central fiber is the minimal resolution of $X_0$ and
whose nearby fibers are smooth weighted hypersurfaces of the same degree.
\end{proof}

\begin{proof}[Proof of \cref{cor:intro-jumping}]
Apply \cref{prop:duval-deformation} to the degree-$12$ surface of
\cref{thm:A5example}, or to any of the nodal surfaces in \cref{sec:Segre}.
Their minimal resolutions have big cotangent bundle, so
\[
 h^0(\widetilde X_0,\Sym^m\Om_{\widetilde X_0}^1)
\]
has cubic growth in $m$.  For a smooth weighted hypersurface $X_t$ in the
same deformation class, \cref{cor:smooth-vanishing} gives
\[
 H^0(X_t,\Sym^m\Om_{X_t}^1)=0
 \qquad(m\ge1).
\]
This proves the claim.
\end{proof}

\begin{remark}
The surfaces constructed above provide further examples of surfaces satisfying
the Green--Griffiths--Lang conjecture.  Indeed, by the work of Bogomolov and
McQuillan, a smooth projective surface of general type with big cotangent
bundle satisfies the Green--Griffiths--Lang conjecture; namely, all entire
curves are contained in a proper algebraic subvariety
\cite{Bogomolov,McQuillan}; see also \cite{ADW1}.  Consequently, the minimal
resolution of the degree-$12$ hypersurface in \cref{thm:A5example} and the
minimal resolutions of the weighted Segre hypersurfaces in
\cref{prop:Segre} satisfy the Green--Griffiths--Lang conjecture.
\end{remark}

\section*{Acknowledgements}

The author thanks Sheng-Li Tan, Xin Lu and Hao Sun for helpful discussions.
The author was partially supported by the National Natural Science
Foundation of China and by the Science and Technology Innovation Plan of
Shanghai (Grant No.\ 23JC1403200).

AI Declaration: The central mathematical ideas and overall approach are due to the author. AI tools assisted with manuscript preparation and English revision. The author take full responsibility for the paper’s content and the accuracy of its references.


\end{document}